\documentclass{article}

\usepackage[french,british]{babel}

\usepackage[T1]{fontenc}

\usepackage{amsmath}

\usepackage{amsfonts}
\usepackage{amsbsy}
\usepackage{amssymb}

\usepackage{amsthm}

\theoremstyle{plain}

\usepackage{latexsym}
\usepackage{amsmath}
\usepackage{amsfonts}
\usepackage{amssymb}
\usepackage{psfrag}
\usepackage{graphicx}

\usepackage{amsmath,amssymb}
\usepackage{graphicx}

\usepackage{amsmath,amssymb}
\usepackage{graphicx}

\newtheorem{ozn}{Definition}[section]
\newtheorem{thm}{Theorem}[section]

\newtheorem{nas}{Corollary}[section]

\newtheorem{lema}{Lemma}[section]

\newcommand{\me}{\mathbf}
\newcommand{\mr}{\mathbb}
\newcommand{\mt}{\mathsf}
\newcommand{\md}{\mathcal}
\newcommand{\ld}{\left}
\newcommand{\rd}{\right}

\newcommand{\ip}{\int_{-\pi}^{\pi}}

\newcommand{\be}{\begin{equation}}
\newcommand{\ee}{\end{equation}}

\newcommand{\bem}{\begin{multline}}
\newcommand{\eem}{\end{multline}}

\newcommand{\bml}{\begin{multline*}}
\newcommand{\eml}{\end{multline*}}

\newcommand{\beg}{\begin{gather}}
\newcommand{\eeg}{\end{gather}}

\begin{document}

\title{Minimax interpolation of continuous time stochastic processes with periodically correlated increments observed with noise}

\author{
Maksym Luz\thanks {BNP Paribas Cardif in Ukraine, Kyiv, Ukraine, maksym.luz@gmail.com},
Mikhail Moklyachuk\thanks
{Department of Probability Theory, Statistics and Actuarial
Mathematics, Taras Shevchenko National University of Kyiv, Kyiv 01601, Ukraine, moklyachuk@gmail.com}
}

\date{\today}

\maketitle

\renewcommand{\abstractname}{Abstract}
\begin{abstract}
We deal with the problem of optimal estimation of the linear   functionals
constructed from the missed values   of a continuous time  stochastic process $\xi(t)$ with periodically stationary increments at points $t\in[0;(N+1)T]$ based on  observations of this process with periodically stationary noise. To solve the problem,
a sequence of stochastic functions
$
\{\xi^{(d)}_j(u)=\xi^{(d)}_j(u+jT,\tau),\,\, u\in [0,T), j\in\mathbb Z\}.
$ is constructed. It    forms a $L_2([0,T);H)$-valued
stationary increment sequence $\{\xi^{(d)}_j,j\in\mathbb Z\}$ or   corresponding to it
an (infinite dimensional) vector stationary increment sequence $\{\vec\xi^{(d)}_j=(\xi^{(d)}_{kj}, k=1,2,\dots)^{\top},
j\in\mathbb Z\}$.
In the case of a known spectral density, we obtain formulas for calculating values of the mean square errors and the spectral characteristics of the optimal estimates of the functionals.
Formulas determining  the least favorable spectral densities and the minimax (robust) spectral
characteristics of the optimal linear estimates of functionals
are derived in the case where the  sets of admissible spectral densities are given.
\end{abstract}

\maketitle

\textbf{Keywords}: {Periodically Correlated Increments, Minimax-Robust Estimate, Mean Square Error}

\maketitle

\vspace{2ex}
\textbf{\bf AMS 2010 subject classifications.} Primary: 60G10, 60G25, 60G35, Secondary: 62M20, 62P20, 93E10, 93E11

\section*{Introduction}

In this article, we investigate the interpolation problem for the stochastic processes $\xi(t)$, $t\in\mr R$, with a periodically
correlated increments $\xi^{(d)}(t,\tau T)=\Delta_{T\tau}^d \xi(t)$ of    order $d$ and period $T$, where $\Delta_{s} \xi(t)=\xi(t)-\xi(t-s)$. The resent studies, for example  Basawa et al. \cite{Basawa}, Dudek et al. \cite{Dudek-Hurd}, Reisen et al. \cite{Reisen2018}, show a  constant interest to the non-stationary models and robust methods of estimation.

Kolmogorov \cite{Kolmogorov},  Wiener \cite{bookWien} and  Yaglom \cite{Yaglom} developed the methods of solution  of interpolation, extrapolation   and filtering problems for stationary stochastic sequences and processes. For these problems, they considered the   estimates $\widetilde{x}(t)$ constructed as a linear combination of the available observations.  As an  optimal linear estimate, they chose the one which  minimizes the mean square error $\Delta(\widetilde{x}(t),f)=\mt E|x(t)-\widetilde{x}(t)|^2$ for the fixed spectral density $f(\lambda)$ of the stationary process or sequence $x(t)$. The problems were also studied in the presence of the noise sequence or process.

The developed  classical estimation methods are not directly applicable in practice. Usually, the exact    spectral structure of the processes isn't  available. The estimated spectral densities can be considered instead. However, Vastola and Poor \cite{VastPoor1983} showed the concrete examples, where such substitution  can result in a significant increase of  the   estimate error.  Therefore, it is reasonable to consider the estimates, which minimize the maximum of the mean-square errors for all spectral densities from a given set of admissible spectral densities simultaneously. The minimax-robust estimation method  was proposed by Grenander \cite{Grenander} who considered an extrapolation of the functional $Ax = \int_{0}^1 a (t) x (t) dt$ as a  game between two players, one of which minimizes $\Delta(\widetilde{A}x,f)$ by $\widetilde{A}\zeta$ and another one maximizes it by $f$. The game has a saddle point under proper conditions:
$$ \max_{\substack{f}} \min_{\substack{\widetilde{A}x}} \mt \Delta(\widetilde{A}x,f) = \min_{\substack{\widetilde{A}x}} \max_{\substack {f}} \Delta(\widetilde{A}x,f) =\nu. 
$$
  For more details, we refer to the further study by Franke and Poor \cite{Franke1984} and  the survey paper by Kassam and Poor \cite{Kassam_Poor1985}. A wide range of results has been obtained by  Moklyachuk \cite{Moklyachuk:1981,Moklyachuk,Moklyachuk2015}. These results have been  extended on the vector-valued stationary processes and sequences by Moklyachuk and Masyutka \cite{Mokl_Mas_book}.

The concept of stationarity   admits some generalizations,  two of which are  stationary $d$th increments and periodical  correlation. A combination of them is in scope of the article.    Random processes with stationary $d$th increments $x(t)$ were introduced   by Yaglom and Pinsker \cite{Pinsker}. The increment process $x^{(d)}(t,\tau ):=\Delta_{\tau}^d x(t)$ generated by $x(t)$ is stationary by the variable $t$, which means that the mathematical expectations $\mt E\xi^{(n)}(t,\tau)$ and $\mt E\xi^{(n)}(t+s,\tau_1)\xi^{(n)}(t,\tau_2)$ do not depend on $t$.
Yaglom and Pinsker described the spectral representation of such process and the spectral density canonical factorization. They also introduced and solved the extrapolation problem for them.
 The minimax-robust extrapolation, interpolation and filtering  problems for stochastic  processes with stationary increments were investigated by Luz and Moklyachuk \cite{Luz_Mokl_book}.

Dubovetska and Moklyachuk \cite{Dubov1}  derived the  classical and minimax-robust estimates for another generalization of stationary processes --  periodically  correlated (cyclostationary) processes, introduced by Gladyshev \cite{Glad1963}. The correlation function  $K(t,s)={\textsf{E}}{x(t)\overline{x(s)}}$ of such processes is a $T$-periodic function: $K(t,s)=K(t+T,s+T)$, which implies a time-dependent spectrum. Periodically  correlated  processes are widely used in signal processing and communications (see Napolitano \cite{Napolitano} for a review of recent works on cyclostationarity and its applications).

In this article, we deal with a problem of the mean-square optimal estimation of the linear functional    $A_{NT}{\xi}=\int_{0}^{(N+1)T}a(t)\xi(t)dt$
which depends on the
unknown values of a random process $\xi(t)$ with periodically stationary $d$th
increments from  observations $\xi(t)+\eta(t)$ at points
$\mr R\setminus[0;(N+1)T]$. The similar problems for discrete time processes have been studied by Kozak and Moklyachuk \cite{Kozak_Mokl}, Luz and Moklyachuk \cite{Luz_Mokl_int_noise_GMI,Luz_Mokl_extra_noise_PCI}. The extrapolation problem without noise  for continues time processes with periodically stationary
increments has been studied by Luz and Moklyachuk \cite{Luz_Mokl_extra_cont_PCI}.
   In sections \ref{section_prelim_i_n_c}, we describe the periodically stationary increment process as a  stationary $H$-valued increment sequence.
In   section \ref{classical_interpolation_i_n_c}, the classical interpolation problem is introduced and solved. Particularly, formulas for calculating  the mean-square error and the spectral characteristic of the optimal linear
estimates of the functional $A_{NT}{\xi}$ are derived under the condition of spectral certainty.
 The results on minimax-robust interpolation for the studied processes are presented in   section \ref{minimax_estimation_i_n_c}, where the  relations that determine the least favourable spectral densities and the minimax spectral
characteristics are derived for some classes of spectral densities.

\section{Preliminary results}
\label{section_prelim_i_n_c}
\subsection{Periodically correlated processes and generated vector stationary sequences}
\label{section_PC_process_i_n_c}
In this section, we present a brief review of periodically correlated processes and describe an approach of presenting it as a  stationary $H$-valued   sequence. I n the next section, this approach is applied  to develop a spectral theory for periodically correlated increment process.

\begin{ozn}[Gladyshev \cite{Glad1963}] A mean-square continuous stochastic process
$\eta:\mathbb R\to H=L_2(\Omega,\mathcal F,\mathbb P)$, with $\textsf {E} \eta(t)=0,$ is called periodically correlated (PC) with period  $T$, if its correlation function  $K(t,s)={\textsf{E}}{\eta(t)\overline{\eta(s)}}$  for all  $t,s\in\mathbb R$ and some fixed $T>0$ is such that
\[
K(t,s)={\textsf{E}}{\eta(t)\overline{\eta(s)}}={\textsf{E}}{\eta(t+T)\overline{\eta(s+T)}}=K(t+T,s+T).
\]
\end{ozn}

For a periodically correlated  stochastic process $\eta(t)$, one can construct
the following sequence of stochastic functions \cite{DubovetskaMoklyachuk2013}, \cite{MoklyachukGolichenko2016}
\begin{equation} \label{zj}
\{\eta_j(u)=\eta(u+jT),u\in [0,T), j\in\mathbb Z\}.
\end{equation}
Sequence (\ref{zj}) forms a $L_2([0,T);H)$-valued
stationary sequence $\{\eta_j,j\in\mathbb Z\}$ with the correlation function
\begin{eqnarray*}
B_{\eta}(l,j) = \langle\eta_l,\eta_j\rangle_H &=&\int_0^T
\textsf{E}[\eta(u+lT)\overline{\eta(u+jT)}]du
\\
&=&\int_0^TK_{\eta}(u+(l-j)T,u)du =
 B_{\eta}(l-j),
\end{eqnarray*}
where
$K_{\eta}(t,s)=\textsf{E}{\eta}(t)\overline{{\eta}(s)}$
is the correlation function of the PC process $\eta(t)$.
Chose the following orthonormal basis in the space $L_2([0,T);\mathbb{R})$
\be\label{orthonormal_basis}
\{\widetilde{e}_k=\frac{1}{\sqrt{T}}e^{2\pi
i\{(-1)^k\left[\frac{k}{2}\right]\}u/T}, k=1,2,3,\dots\}, \quad
\langle \widetilde{e}_j,\widetilde{e}_k\rangle=\delta_{kj}.
\ee
Then the stationary sequence $\{\zeta_j,j\in\mathbb Z\}$  can be represented in the form
\be\label{eta_cont_i_n_c}
\eta_j= \sum_{k=1}^\infty \eta_{kj}\widetilde{e}_k,
\ee
where
\[\eta_{kj}=\langle\eta_j,\widetilde{e}_k\rangle =
\frac{1}{\sqrt{T}} \int_0^T \eta_j(v)e^{-2\pi
i\{(-1)^k\left[\frac{k}{2}\right]\}v/T}dv.\]
The sequence
$\{\zeta_j,j\in\mathbb Z\},$
   or a corresponding to it  vector sequence
  \[\{\vec \eta_j=(\eta_{kj}, k=1,2,\dots)^{\top},
j\in\mathbb Z\},\]
is called a generated by the process $\{\eta(t),t\in\mathbb R\}$ vector stationary sequence.
The components $\{\eta_{kj}\}: k=1,2,\dots;j\in\mathbb Z$  of the generated stationary sequence
$\{\eta_j,j\in\mathbb Z\}$ satisfy the  relations \cite{Kallianpur}, \cite{Moklyachuk:1981}
\[
\textsf{E}{\eta_{kj}}=0, \quad \|{\eta}_j\|^2_H=\sum_{k=1}^\infty
\textsf{E}|\eta_{kj}|^2\leq P_\eta=B_\eta(0), \quad
\textsf{E}\eta_{kl}\overline{\eta}_{nj}=\langle
R_{\eta}(l-j)\widetilde{e}_k,\widetilde{e}_n\rangle.
\]
The correlation function $R_{\eta}(j)$  of the generated stationary
sequence $\{\eta_j,j\in\mathbb Z\}$
 is a correlation operator function.
 The correlation operator $R_{\eta}(0)=R_{\eta}$ is a
kernel operator and its kernel norm satisfies the following limitations:
\[
\|{\eta}_j\|^2_H=\sum_{k=1}^\infty \langle R_{\eta}
\widetilde{e}_k,\widetilde{e}_k\rangle\leq P_\eta. \]
The generated stationary sequence $\{\eta_j,j\in\mathbb Z\}$ has the spectral density function
$g(\lambda)=\{g_{kn}(\lambda)\}_{k,n=1}^\infty$, that is positive valued operator  functions of variable
 $\lambda\in [-\pi,\pi)$, if its correlation function $R_{\eta}(j)$ can be represented in the form
\[
\langle R_{\eta}(j)\widetilde{e}_k,\widetilde{e}_n\rangle=\frac{1}{2\pi} \int _{-\pi}^{\pi}
e^{ij\lambda}\langle g(\lambda) \widetilde{e}_k,\widetilde{e}_n\rangle d\lambda.
\]
We finish our review by the statement, that
for almost all   $\lambda\in [-\pi,\pi)$ the spectral density $f(\lambda)$ is a kernel operator with an integrable kernel norm
\[
\sum_{k=1}^\infty \frac{1}{2\pi} \int _{-\pi}^\pi \langle g(\lambda)
\widetilde{e}_k,\widetilde{e}_k\rangle d\lambda=\sum_{k=1}^\infty\langle R_{\eta}
\widetilde{e}_k,\widetilde{e}_k\rangle=\|{\eta}_j\|^2_H\leq P_\eta.
\]

\subsection{Stochastic processes with periodically correlated $d$th increments}
\label{section_PC_increment_i_n_c}

For a given stochastic process $\{\xi(t),t\in\mathbb R\}$,   consider the stochastic $d$th increment process
\begin{equation}
\label{oznachPryrostu_cont}
\xi^{(d)}(t,\tau)=(1-B_{\tau})^d\xi(t)=\sum_{l=0}^d(-1)^l{d \choose l}\xi(t-l\tau),
\end{equation}
with the step $\tau\in\mr R$, generated by the stochastic process $\xi(t)$. Here $B_{\tau}$ is the backward shift operator: $B_{\tau}\xi(t)=\xi(t-\tau)$, $\tau\in \mr R$.

We prefer to use the notation $\xi^{(d)}(t,\tau)$ instead of widely used $\Delta_{\tau}^{d}\xi(t)$ to avoid a duplicate with the mean square error notation.

\begin{ozn}[Luz and Moklyachuk \cite{Luz_Mokl_extra_cont_PCI}]
\label{OznPeriodProc2_cont}
A stochastic process $\{\xi(t),t\in\mathbb R\}$ is called a \emph{stochastic
process with periodically stationary (periodically correlated) increments} with the step $\tau\in\mr Z$ and the period $T>0$ if the mathematical expectations
\begin{eqnarray*}
\mt E\xi^{(d)}(t+T,\tau T) & = & \mt E\xi^{(d)}(t,\tau T)=c^{(d)}(t,\tau T),
\\
\mt E\xi^{(d)}(t+T,\tau_1 T)\xi^{(d)}(s+T,\tau_2 T)
& = & D^{(d)}(t+T,s+T;\tau_1T,v_2T)
\\
& = & D^{(d)}(t,s;\tau_1T,\tau_2T)
\end{eqnarray*}
exist for every  $t,s\in \mr R$, $\tau_1,\tau_2 \in \mr Z$ and for some fixed $T>0$.
\end{ozn}

The functions $c^{(d)}(t,\tau T)$ and  $D^{(d)}(t,s;\tau_1T,\tau_2 T)$ from  Definition \ref{OznPeriodProc2_cont} are called the \emph{mean value} and  the \emph{structural function} of the stochastic
process $\xi(t)$ with periodically stationary (periodically correlated) increments.

For the stochastic process $\{\xi(t), t\in \mathbb R\}$ with periodically correlated  increments $\xi^{(d)}(t,\tau T)$ and the integer step $\tau$, we follow the procedure described in  Section \ref{section_PC_process_i_n_c} and construct
a sequence of stochastic functions
\begin{equation} \label{xj}
\{\xi^{(d)}_j(u):=\xi^{(d)}_{j,\tau}(u)=\xi^{(d)}_j(u+jT,\tau T),\,\, u\in [0,T), j\in\mathbb Z\}.
\end{equation}

Sequence (\ref{xj}) forms a $L_2([0,T);H)$-valued
stationary increment sequence $\{\xi^{(d)}_j,j\in\mathbb Z\}$ with the structural function
\begin{eqnarray*}
B_{\xi^{(d)}}(l,j)&= &\langle\xi^{(d)}_l,\xi^{(d)}_j\rangle_H =\int_0^T
\textsf{E}[\xi^{(d)}_j(u+lT,\tau_1 T)\overline{\xi^{(d)}_j(u+jT,\tau_2 T)}]du
\\&=&\int_0^T D^{(d)}(u+(l-j)T,u;\tau_1T,\tau_2T) du =
 B_{\xi^{(d)}}(l-j).
 \end{eqnarray*}
Making use of the orthonormal basis \eqref{orthonormal_basis} the stationary increment sequence  $\{\xi^{(d)}_j,j\in\mathbb Z\}$ can be represented in the form
\begin{equation} \label{zeta_cont_i_n_c}
\xi^{(d)}_j= \sum_{k=1}^\infty \xi^{(d)}_{kj}\widetilde{e}_k,\end{equation}
where
\[\xi^{(d)}_{kj}=\langle\xi^{(d)}_j,\widetilde{e}_k\rangle =
\frac{1}{\sqrt{T}} \int_0^T \xi^{(d)}_j(v)e^{-2\pi
i\{(-1)^k\left[\frac{k}{2}\right]\}v/T}dv.\]

We call this sequence
$\{\xi^{(d)}_j,j\in\mathbb Z\},$
   or the corresponding to it vector sequence
  \be \label{generted_incr_seq_i_n_c}
  \{\vec\xi^{(d)}(j,\tau)=\vec\xi^{(d)}_j=(\xi^{(d)}_{kj}, k=1,2,\dots)^{\top}=(\xi^{(d)}_{k}(j,\tau), k=1,2,\dots)^{\top},
j\in\mathbb Z\},\ee
\emph{an infinite dimension vector stationary increment sequence} generated by the increment process $\{\xi^{(d)}(t,\tau T),t\in\mathbb R\}$.
Further, we will omit the word vector in the notion generated vector stationary increment sequence.

Components $\{\xi^{(d)}_{kj}\}: k=1,2,\dots;j\in\mathbb Z$  of the generated stationary increment sequence
$\{\xi^{(d)}_j,j\in\mathbb Z\}$ are such that, \cite{Kallianpur}, \cite{Moklyachuk:1981}
\[
\textsf{E}{\xi^{(d)}_{kj}}=0, \quad \|\xi^{(d)}_j\|^2_H=\sum_{k=1}^\infty
\textsf{E}|\xi^{(d)}_{kj}|^2\leq P_{\xi^{(d)}}=B_{\xi^{(d)}}(0),
\]
and
\[\textsf{E}\xi^{(d)}_{kl}\overline{\xi^{(d)}_{nj}}=\langle
R_{\xi^{(d)}}(l-j;\tau_1,\tau_2)\widetilde{e}_k,\widetilde{e}_n\rangle.
\]
The \emph{structural function}  $R_{\xi^{(d)}}(j):=R_{\xi^{(d)}}(j;\tau_1,\tau_2)$  of the generated stationary increment
sequence $\{\xi^{(d)}_j,j\in\mathbb Z\}$
 is a correlation operator function.
 The correlation operator $R_{\xi^{(d)}}(0)=R_{\xi^{(d)}}$ is a
kernel operator and its kernel norm satisfies the following limitations:
\[
\|\xi^{(d)}_j\|^2_H=\sum_{k=1}^\infty \langle R_{\xi^{(d)}}
\widetilde{e}_k,\widetilde{e}_k\rangle\leq P_{\xi^{(d)}}. \]

 Suppose that  the structural function $R_{\xi^{(d)}}(j)$ admits a representation
\[
\langle R_{\xi^{(d)}}(j;\tau_1, \tau_2)\widetilde{e}_k,\widetilde{e}_n\rangle=\frac{1}{2\pi} \int _{-\pi}^{\pi}
e^{ij\lambda}(1-e^{-i\tau_1\lambda})^d(1-e^{i\tau_2\lambda})^d\frac{1}
{\lambda^{2d}}\langle f(\lambda) \widetilde{e}_k,\widetilde{e}_n\rangle d\lambda.
\]
Then
$f(\lambda)=\{f_{kn}(\lambda)\}_{k,n=1}^\infty$ is a \emph{spectral density function} of the generated stationary increment sequence $\{\xi^{(d)}_j,j\in\mathbb Z\}$. It  is a positive valued operator  functions of variable
 $\lambda\in [-\pi,\pi)$, and for almost all   $\lambda\in [-\pi,\pi)$ it is a kernel operator with an integrable kernel norm
\begin{multline} \label{P1}
\sum_{k=1}^\infty \frac{1}{2\pi} \int _{-\pi}^\pi (1-e^{-i\tau_1\lambda})^d(1-e^{i\tau_2\lambda})^d\frac{1}
{\lambda^{2d}} \langle f(\lambda)
\widetilde{e}_k,\widetilde{e}_k\rangle d\lambda=
\\
=\sum_{k=1}^\infty\langle R_{\xi^{(d)}}
\widetilde{e}_k,\widetilde{e}_k\rangle=\|{\zeta}_j\|^2_H\leq P_{\xi^{(d)}}.
\end{multline}

The stationary $d$th increment sequence $\vec{\xi}^{(d)}_j$ admits the spectral representation
\[
\vec{\xi}^{(d)}_j=\int_{-\pi}^{\pi}e^{i \lambda j}(1-e^{-i\tau\lambda})^{d}\frac{1}{(i\lambda)^{d}}d\vec{Z}_{\xi^{(d)}}(\lambda),
\]
where $\vec{Z}_{\xi^{(d)}}(\lambda)=\{Z_{k}(\lambda)\}_{k=1}^{\infty}$ is a vector-valued random process with uncorrelated increments on $[-\pi,\pi)$.

Consider the generated stationary stochastic sequence $\vec\eta_j$ defined in Section \ref{section_PC_process_i_n_c}, which is uncorrelated with the increment sequence $\vec \xi^{(d)}_j$. It admits the spectral representation
\begin{equation}
\label{SpectrPred_incr_eta_c}
\vec\eta_j=\int_{-\pi}^{\pi}e^{i\lambda j}d\vec{Z}_{\eta}(\lambda),
\end{equation}
where $\vec{Z}_{\eta}(\lambda)$   is a vector-valued random process with uncorrelated increments on $[-\pi,\pi)$.
The spectral representation  of the
sequence $\vec \zeta^{(d)}_j$, generated by the process $\zeta(t)=\xi(t)+\eta(t)$, is determined by the spectral densities $f(\lambda)$
and $g(\lambda)$ by the relation
\begin{equation}
\label{SpectrPred_incr_zeta_c}
\vec{\zeta}^{(d)}_j=\int_{-\pi}^{\pi}e^{i \lambda j}(1-e^{-i\tau\lambda})^{d}\frac{1}{(i\lambda)^{d}}d\vec{Z}_{\xi^{(d)}+\eta^{(d)}}(\lambda).
\end{equation}
The random processes $\vec{Z}_{\eta}(\lambda)$ and $\vec{Z}_{\eta^{(d)}}(\lambda)$ are connected by the relation $d\vec{Z}_{\eta^{(d)}}(\lambda)=(i\lambda)^d d\vec{Z}_{\eta }(\lambda)$,
$\lambda\in[-\pi,\pi)$, see \cite{Luz_Mokl_book}. The spectral density $p(\lambda)=\{p_{kn}(\lambda)\}_{k,n=1}^\infty$ of the
sequence $\vec \zeta^{(d)}_j$  is determined by the spectral densities $f(\lambda)$
and $g(\lambda)$ by the relation
\[p(\lambda)=f(\lambda)+\lambda^{2d}g(\lambda).\]

\section{Hilbert space projection method of interpolation}\label{classical_interpolation_i_n_c}

Consider an infinite dimension vector stationary increment sequence $\{\vec{\xi}^{(d)}_j,j\in\mr Z\}$ (\ref{generted_incr_seq_i_n_c}) generated by the increment process $\xi^{(d)}(t,\tau T)$, $t\in\mathbb R$, which has
 the spectral density matrix $f(\lambda)=\{f_{ij}(\lambda)\}_{i,j=1}^{\infty}$.

Let  $\eta(t)$, $t\in\mathbb R$, be an uncorrelated with
the process $\xi(t)$ periodically stationary stochastic process, which generates  an infinite dimension vector stationary   sequence $\{\vec{\eta}_j,j\in\mr Z\}$ (\ref{eta_cont_i_n_c}) with the spectral density matrix
$g(\lambda)=\{g_{ij}(\lambda)\}_{i,j=1}^{\infty}$.

We will assume that the mean
values of the increment sequence $\vec{\xi}^{(d)}_j$ and stationary
sequence  $\vec{\eta}_j$ equal to 0. We will also consider the increment step $\tau>0$.

By the classical \textbf{interpolation problem} we understand the problem of the mean square optimal linear  estimation  of the functional
\[ A_{NT}{\xi}=\int_{0}^{(N+1)T}a(t)\xi(t)dt\]
which depends on the unknown values of the stochastic process $\xi(t)$ with periodically correlated $d$th increments.
Estimates are based on observations of the process $\zeta(t)=\xi(t)+\eta(t)$  at points $t\in \mr R\setminus[0;(N+1)T]$, where $\eta(t)$.

Assume that spectral densities $f(\lambda)$ and $g(\lambda)$ satisfy the \emph{minimality condition}
\be
 \ip \text{Tr}\left[ \frac{\lambda^{2d}}{|1-e^{i\lambda\tau}|^{2d}}(f(\lambda)+\lambda^{2d}g(\lambda))^{-1}\right]
 d\lambda<\infty.
\label{umova11_i_n_c}
\ee
This is the necessary and sufficient condition under which the mean square errors of the optimal estimates of the functional $A_N\vec\xi$ to be defined below is not equal to $0$.

The  functional $A_{NT}{\xi}$ does not  belong to the Hilbert space   $ H=L_2(\Omega, \mathcal F, \mt P)$   of random variables  with a zero mean value and a finite variance.
To apply a Hilbert pace projection method of estimation,  represent the functional  $A_{NT}{\xi}$ as a sum of a functional with finite second moment  belonging to $H$ and a functional depended on the  observed values of the process $\zeta(t)$ (``initial values''). This representation is described by the following two lammas.

\begin{lema}[\cite{Luz_Mokl_extra_cont_PCI}]\label{nas predst A_T_cont}
The  linear functional
\[A_{NT}\zeta=\int_{0}^{(N+1)T}a(t)\zeta(t)dt\]
allows the representation
\[
    A_{NT}\zeta=B_{NT}\zeta-V_{NT}\zeta,\]
where
\[
    B_{NT}\zeta=\int_0^{(N+1)T} b^{\tau,N}(t)\zeta^{(d)}(t,\tau T)dt,\quad V_{NT}\zeta=\int_{-\tau T d}^{0}v^{\tau,N}(t)\zeta(t)dt,\]
and
\begin{eqnarray}\label{koefv_N_cont}
    v^{\tau,N}(t)&=&\sum_{l=\left \lceil-\frac{t}{\tau T}\right \rceil}^{\min\left \{\left [\frac{{(N+1)T}-t}{\tau T}\right ],d\right \}}(-1)^l{d \choose l}b^{\tau, N}(t+l\tau T),\quad t\in[-\tau T d;0),
\\
\label{koef_N b_cont}
    b^{\tau,N}(t)&=&\sum_{k=0}^{\left[\frac{{(N+1)T}-t}{\tau T}\right]}a( t +\tau T k)d(k)= D^{\tau T,N}\me a(t),\,
t\in[0;{(N+1)T}].
\end{eqnarray}
Here  $\lceil x\rceil$ denotes the least integer greater than or equal to  $x$, $[x]$ denotes the integer part of $x$, coefficients
  $\{d(k):k\geq0\}$ are determined by the relation
\[\sum_{k=0}^{\infty}d (k)x^k=\left(\sum_{j=0}^{\infty}x^{
j}\right)^d,\]
the linear transformation $D^{\tau T,N}$ acts on an arbitrary function $x(t)$, $t\in[0;{(N+1)T}]$, as follows:
\[
    D^{\tau T,N}\me x(t)=\sum_{k=0}^{\left[\frac{{(N+1)T}-t}{\tau T}\right]}x(t+\tau T k)d(k).\]
\end{lema}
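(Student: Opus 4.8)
The plan is to start from the candidate expression $B_{NT}\zeta$ and reduce it to $A_{NT}\zeta+V_{NT}\zeta$ by expanding the increment process and reorganising the resulting integrals. First I would insert the definition \eqref{oznachPryrostu_cont} of the $d$th increment,
\[
\zeta^{(d)}(t,\tau T)=\sum_{l=0}^d(-1)^l\binom{d}{l}\zeta(t-l\tau T),
\]
into $B_{NT}\zeta=\int_0^{(N+1)T}b^{\tau,N}(t)\zeta^{(d)}(t,\tau T)\,dt$ and interchange the finite sum with the integral. For each $l$ I would then substitute $s=t-l\tau T$, so that the $l$th term becomes $(-1)^l\binom{d}{l}\int_{-l\tau T}^{(N+1)T-l\tau T}b^{\tau,N}(s+l\tau T)\zeta(s)\,ds$. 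Since $l$ ranges over $0,\dots,d$, the union of the integration intervals is $[-\tau T d,(N+1)T]$, which already explains geometrically why the ``initial values'' are supported on $[-\tau T d,0)$.

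Next I would collect the coefficient of $\zeta(s)$ pointwise. For fixed $s$ the admissible indices are exactly those with $-l\tau T\le s\le (N+1)T-l\tau T$ and $0\le l\le d$, i.e.
\[
\max\Bigl(0,\bigl\lceil-\tfrac{s}{\tau T}\bigr\rceil\Bigr)\le l\le\min\Bigl(d,\bigl\lfloor\tfrac{(N+1)T-s}{\tau T}\bigr\rfloor\Bigr),
\]
so that $B_{NT}\zeta=\int_{-\tau T d}^{(N+1)T}c(s)\zeta(s)\,ds$, where $c(s)$ is the correspondingly truncated difference of $b^{\tau,N}$. I would then split this integral at $0$. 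On the boundary block $s\in[-\tau T d,0)$ the lower limit is $\lceil-s/(\tau T)\rceil>0$, and the coefficient $c(s)$ is precisely $v^{\tau,N}(s)$ as in \eqref{koefv_N_cont}; this block is therefore $V_{NT}\zeta$. On the interior block $s\in[0,(N+1)T]$ the lower limit collapses to $0$ (since $l=0$ is always admissible there), and it remains only to show $c(s)=a(s)$.

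The heart of the matter is this interior identity. Substituting $b^{\tau,N}(t)=D^{\tau T,N}\me a(t)=\sum_{k\ge0}a(t+k\tau T)d(k)$ and regrouping the double sum by $j=l+k$, one finds that the coefficient of $a(s+j\tau T)$ is $\sum_{l=0}^{\min(d,j)}(-1)^l\binom{d}{l}d(j-l)$. This is exactly the $j$th Cauchy coefficient of the product of the generating functions $\sum_l(-1)^l\binom{d}{l}x^l=(1-x)^d$ and $\sum_k d(k)x^k=\bigl(\sum_j x^j\bigr)^d=(1-x)^{-d}$, whose product is $1$; hence it equals $\delta_{j,0}$, giving $c(s)=a(s)$ and thus $\int_0^{(N+1)T}c(s)\zeta(s)\,ds=A_{NT}\zeta$, whence $A_{NT}\zeta=B_{NT}\zeta-V_{NT}\zeta$. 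I expect the only real obstacle to be the bookkeeping of the ceiling/floor limits: one must verify that the truncation built into $D^{\tau T,N}$ (the upper limit $M-l$ on $k$) dovetails with the truncation $\min(d,M)$ on $l$ so that, after regrouping, each $j\le M$ carries the \emph{complete} inner range $0\le l\le\min(d,j)$ for which the convolution identity yields $\delta_{j,0}$, and that the remaining boundary terms reassemble into $v^{\tau,N}$ with exactly the index bounds in \eqref{koefv_N_cont}, with no shifted, over- or under-counted contributions.
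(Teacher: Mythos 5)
Your proof is correct. The paper itself states this lemma without proof, deferring to \cite{Luz_Mokl_extra_cont_PCI}, and your argument supplies exactly the standard verification used in that line of work: expand $\zeta^{(d)}(t,\tau T)$ via \eqref{oznachPryrostu_cont}, shift variables $s=t-l\tau T$, and collapse the interior coefficient through the convolution identity $(1-x)^d\bigl(\sum_{j\ge 0}x^j\bigr)^d=1$, which yields $\delta_{j,0}$. The one point that genuinely requires care — that the truncation $k\le M-l$ inherited from $D^{\tau T,N}$ (with $M=\bigl[\tfrac{(N+1)T-s}{\tau T}\bigr]$) is equivalent to $j=k+l\le M$, so every $j\le M$ carries the complete inner range $0\le l\le\min(d,j)$ needed for the identity, while the leftover terms on $[-\tau Td,0)$ reproduce the limits in \eqref{koefv_N_cont} exactly — is precisely the bookkeeping you carried out, so the argument is complete.
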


From Lemma \ref{nas predst A_T_cont}, we obtain the following representation of the functional $A_{NT}\xi$:
\[A_{NT}\xi=A_{NT}\zeta-A_{NT}\eta=B_{NT}\zeta-A_{NT}\eta-V_{NT}\zeta=H_{NT}\xi-V_{NT}\zeta,\]
\[
 H_{NT}\xi:=B_{NT}\zeta-A_{NT}\eta,\]
where
\[
 A_{NT}\zeta=\int_{0}^{(N+1)T}a(t)\zeta(t)dt,\quad A_{NT}\eta=\int_{0}^{(N+1)T}a(t)\eta(t)dt,\]
\[
 B_{NT}\zeta=\int_0^{(N+1)T} b_{\tau,N}(t)\zeta^{(d)}(t,\tau T)dt,\quad
 V_{NT}\zeta=\int_{-\tau T d}^{0}v_{\tau,N}(t)\zeta(t)dt, \]
the functions $b_{\tau,N}(t)$, $t\in[0;(N+1)T]$, and $v_{\tau,N}(t)$, $t\in[-\tau
Td;0)$, are calculated by formulas (\ref{koef_N b_cont}) and
(\ref{koefv_N_cont}) respectively.

 The following lemma represent the functional $H_{NT}\xi$ in terms of the sequences   $\vec\eta_j=(\eta_{kj},k=1,2,\dots)^{\top}$ and $\vec\zeta^{(d)}_j=\vec\xi^{(d)}_j+\vec\eta^{(d)}_j=(\zeta^{(d)}_{kj},k=1,2,\dots)^{\top}$, $j\in\mr Z$.

\begin{lema}
The functional $H_{NT}\xi$ can be represented in the form
\[
 H_{NT}\xi= \sum_{j=0}^N
{(\vec{b}^{\tau,N}_j)}^{\top}\vec{\zeta}^{(d)}_j- \sum_{j=0}^N
{(\vec{a}_j)}^{\top}\vec{\eta}_j
=B_{N}\vec \zeta - V_{N}\vec \eta=:H_N\vec \xi,
\]
where
\[
\vec{b}^{\tau,N}_j=(b^{\tau,N}_{kj},k=1,2,\dots)^{\top}=
(b^{\tau,N}_{1j},b^{\tau,N}_{3j},b^{\tau,N}_{2j},\dots,b^{\tau,N}_{2k+1,j},b^{\tau,N}_{2k,j},\dots)^{\top},
\]
\[
 b^{\tau,N}_{kj}=\langle b^{\tau,N}_j,\widetilde{e}_k\rangle =
\frac{1}{\sqrt{T}} \int_0^T b^{\tau,N}_j(v)e^{-2\pi
i\{(-1)^k\left[\frac{k}{2}\right]\}v/T}dv,
\]
and
\[
\vec{a}_j=(a_{kj},k=1,2,\dots)^{\top}=
(a_{1j},a_{3j},a_{2j},\dots,a_{2k+1,j},a_{2k,j},\dots)^{\top},
\]
\[
 a_{kj}=\langle a_j,\widetilde{e}_k\rangle =
\frac{1}{\sqrt{T}} \int_0^T a_j(v)e^{-2\pi
i\{(-1)^k\left[\frac{k}{2}\right]\}v/T}dv,\quad k=1,2,\dots,\,j=0,1,\ldots,N.
\]
The coefficients $\{\vec{a}_j, j=0,1,\dots,N\}$ and  $\{\vec{b}^{\tau,N}_j, j=0,1,\dots,N\}$ are related as
\be \label{a_b_N_relation}
\vec{b}_j^{\tau,N}=\sum_{m=j}^{N}\mt{diag}_{\infty}(d_{\tau}(m-j))\vec{a}_m=(D^{\tau}_N{\me a_N})_j,  \quad j=0,1,\dots,N.
\ee
where
$\me a_N=((\vec{a}_0)^{\top},(\vec{a}_1)^{\top}, \ldots,(\vec{a}_N)^{\top})^{\top}$, the coefficients $\{d_{\tau}(k):k\geq0\}$ are determined by the relationship
\[
 \sum_{k=0}^{\infty}d_{\tau}(k)x^k=\left(\sum_{j=0}^{\infty}x^{\tau j}\right)^d,\]
$D^{\tau}_N$ is a linear transformation  determined by a $N\times N$ matrix with the infinite dimension matrix entries
 $D^{\tau}_N(k,j), k,j=0,1,\dots,N$ such that $D^{\tau}_N(k,j)=\mt{diag}_{\infty}(d_{\tau}(j-k))$ if $0\leq k\leq j \leq N$ and $D^{\tau}_N(k,j)=\mt{diag}_{\infty}(0)$ for $0\leq j<k\leq N$; $\mt{diag}_{\infty}(x)$ denotes an infinite dimensional diagonal matrix with the entry $x$ on its diagonal.
\end{lema}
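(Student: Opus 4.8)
The plan is to turn the two integrals in $H_{NT}\xi=B_{NT}\zeta-A_{NT}\eta$ into the finite sums over $j=0,\dots,N$ by slicing the domain of integration into periods and then passing to the Fourier coefficients in the basis \eqref{orthonormal_basis}. First I would partition $[0,(N+1)T]$ into the subintervals $[jT,(j+1)T)$, $j=0,\dots,N$, and substitute $t=u+jT$, $u\in[0,T)$, on each of them. This rewrites $A_{NT}\eta=\sum_{j=0}^N\int_0^T a_j(u)\eta_j(u)\,du$ with $a_j(u)=a(u+jT)$, $\eta_j(u)=\eta(u+jT)$, and likewise $B_{NT}\zeta=\sum_{j=0}^N\int_0^T b^{\tau,N}_j(u)\zeta^{(d)}_j(u)\,du$ with $b^{\tau,N}_j(u)=b^{\tau,N}(u+jT)$.

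Second, on each period I would expand both factors in the orthonormal basis, $a_j=\sum_n a_{nj}\widetilde e_n$ and $\eta_j=\sum_k\eta_{kj}\widetilde e_k$, and interchange the summations with the $u$-integration. Because the functional carries no complex conjugation, the relevant orthogonality is the \emph{bilinear} relation $\int_0^T\widetilde e_n\widetilde e_k\,du=\tfrac1T\int_0^T e^{2\pi i(\alpha_n+\alpha_k)u/T}\,du=\delta_{\alpha_n+\alpha_k,0}$, where $\alpha_k=(-1)^k[k/2]$ runs through $0,1,-1,2,-2,\dots$. Thus $\alpha_n+\alpha_k=0$ holds exactly for the involution $\sigma$ swapping $2m\leftrightarrow 2m+1$, so the double sum collapses to $\int_0^T a_j\eta_j\,du=\sum_k a_{\sigma(k),j}\eta_{kj}=(\vec a_j)^\top\vec\eta_j$, the permuted ordering $\vec a_j=(a_{1j},a_{3j},a_{2j},\dots)^\top$ being precisely what absorbs $\sigma$. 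The identical computation gives $\int_0^T b^{\tau,N}_j\zeta^{(d)}_j\,du=(\vec b^{\tau,N}_j)^\top\vec\zeta^{(d)}_j$, and summing over $j$ produces $H_{NT}\xi=\sum_{j=0}^N(\vec b^{\tau,N}_j)^\top\vec\zeta^{(d)}_j-\sum_{j=0}^N(\vec a_j)^\top\vec\eta_j=B_N\vec\zeta-V_N\vec\eta$.

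Third, for the coefficient relation \eqref{a_b_N_relation} I would start from \eqref{koef_N b_cont}, set $t=u+jT$, and reindex $b^{\tau,N}(u+jT)=\sum_k a(u+(j+\tau k)T)d(k)$ through $m=j+\tau k$. Comparing the generating functions $\sum_k d(k)x^k=(1-x)^{-d}$ and $\sum_k d_\tau(k)x^k=(1-x^\tau)^{-d}$ gives $d_\tau(m-j)=d((m-j)/\tau)$ when $\tau\mid(m-j)$ and $0$ otherwise, so the reindexed sum becomes the function identity $b^{\tau,N}_j(u)=\sum_{m=j}^N d_\tau(m-j)a_m(u)$. Projecting onto the basis component by component, the scalar weights $d_\tau(m-j)$ pass through the projection as the $\mt{diag}_\infty$ factors, yielding $\vec b^{\tau,N}_j=\sum_{m=j}^N\mt{diag}_\infty(d_\tau(m-j))\vec a_m=(D^\tau_N\me a_N)_j$.

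I expect the second step to be the delicate one: pinning down the bilinear orthogonality together with its index permutation (the absence of conjugation is exactly what forces the $2m\leftrightarrow 2m+1$ swap in the definitions of $\vec a_j$ and $\vec b^{\tau,N}_j$), and rigorously justifying the interchange of the infinite Fourier series with the integration and the $j$-summation, for which $a_j,b^{\tau,N}_j\in L_2([0,T))$ and the kernel-norm bound \eqref{P1} are the working hypotheses. The bookkeeping of the summation limits in the reindexing of the third step—checking that $m$ runs over $j,\dots,N$ once the cutoff $[((N+1)T-t)/(\tau T)]$ is rewritten (up to the measure-zero endpoint $u=0$)—is routine but needs verification.
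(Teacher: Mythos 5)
Your proposal is correct and takes essentially the same route as the paper's own proof: slicing the integrals over the periods $[jT,(j+1)T)$, collapsing the double Fourier sum via the bilinear (conjugation-free) orthogonality — the paper phrases this as solving $(-1)^k\left[\frac{k}{2}\right]+(-1)^m\left[\frac{m}{2}\right]=0$, whose solution pairs are exactly your involution $\sigma$ — and then obtaining \eqref{a_b_N_relation} by substituting $t=u+jT$ into \eqref{koef_N b_cont} and projecting componentwise. Your write-up is in fact slightly more explicit than the paper's on the $d_{\tau}(m-j)=d((m-j)/\tau)$ reindexing, the measure-zero endpoint, and the interchange of sums and integrals, which the paper leaves implicit.
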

\begin{proof}
Define
\[
b^{\tau,N}_j(u)=b^{\tau,N}(u+jT),\quad \zeta^{(d)}_j(u)=\zeta^{(d)}_j(u+jT,\tau T),\,\, u\in [0,T),
\]
and
\[
a_j(u)=a(u+jT),\quad \eta_j(u)=\eta_j(u+jT),\,\, u\in [0,T).
\]

Making use of the decomposition (\ref{zeta_cont_i_n_c}) for the generated stationary increment sequence  $\{\zeta^{(d)}_j,j\in\mathbb Z\}$  and the solution of equation
\begin{equation} \label{peretv}
(-1)^k\left[\frac{k}{2}\right]+(-1)^m\left[\frac{m}{2}\right]=0
\end{equation}
of two variables  $(k,m)$, which is given by pairs  $(1,1)$, $(2l+1,2l)$ and
$(2l,2l+1)$ for $l=2,3,\dots$, the functional $B_{NT}\zeta$ can be rewritten in the form
\cite{MoklyachukGolichenko2016}
\begin{eqnarray*}
    B_{NT}\zeta&=&\int_0^{(N+1)T} b^{\tau,N}(t)\zeta^{(d)}(t,\tau T)dt
= \sum_{j=0}^{N}\int_{0}^{T}
b^{\tau,N}_j(u)\zeta^{(d)}_j(u)
du
\\
&=&\sum_{j=0}^N\frac{1}{T}\int_{0}^{T}\left(\sum_{k=1}^\infty b^{\tau,N}_{kj}
e^{2\pi i\{(-1)^k\left[\frac{k}{2}\right]\}u/T} \right)
\left(\sum_{m=1}^\infty \zeta^{(d)}_{mj} e^{2\pi
i\{(-1)^m\left[\frac{m}{2}\right]\}u/T} \right)du
\\
&=&\sum_{j=0}^N\sum_{k=1}^\infty \sum_{m=1}^\infty b^{\tau,N}_{kj} \zeta^{(d)}_{mj}
\frac{1}{T}\int_{0}^{T}e^{2\pi
i\left\{(-1)^k\left[\frac{k}{2}\right]+(-1)^m\left[\frac{m}{2}\right]\right\}u/T}
du
\\
&=&\sum_{j=0}^N\sum_{k=1}^{\infty} b^{\tau,N}_{kj}\zeta^{(d)}_{kj}=
 \sum_{j=0}^N
{(\vec{b}^{\tau,N}_j)}^{\top}\vec{\zeta}^{(d)}_j
\\
&=&B_{N}\vec \zeta.
\end{eqnarray*}

In the same way we obtain the representation of the the functional $V_{NT}\eta$
\begin{eqnarray*}
    V_{NT}\eta&=&\int_0^{(N+1)T} a(t)\eta(t)dt
= \sum_{j=0}^{N}\int_{0}^{T}
a_j(u)\eta_j(u)
du
\\
&=&\sum_{j=0}^N\sum_{k=1}^{\infty} a_{kj}\eta_{kj}=
 \sum_{j=0}^N
{(\vec{a}_j)}^{\top}\vec{\eta}_j=V_{N}\vec \eta.
\end{eqnarray*}

Then from Lemma \ref{nas predst A_T_cont} obtain
\[
b^{\tau,N}_j(u)=\sum_{l=0}^{\left[\frac{{(N+1)T}-u-jT}{\tau T}\right]}a( u+jT +\tau T l)d(l)= D^{\tau T,N} a(u),\,
u\in[0;(N+1)T), \, j=0,1,\ldots,N,
\]
and
\[
b^{\tau,N}_{kj}=\sum_{l=0}^{\left[\frac{N-j}{\tau}\right]}a_{kj+\tau l}d(l),\,
 j=0,1,\ldots,N,
\]
which finalizes the proof.
\end{proof}

Assume, that the coefficients $\{\vec{a}_j, j=0,1,\dots,N\}$ and  $\{\vec{b}^{\tau,N}_j, j=0,1,\dots,N\}$  satisfy the conditions
\begin{equation} \label{coeff_a_i_n_c}
 \|\vec{a}_j\|<\infty, \quad \|\vec{a}_j\|^2=\sum_{k=1}^\infty |{a}_{kj}|^2,\quad    j=0,1,\dots,N.
\end{equation}
and
\begin{equation} \label{coeff_b_i_n_c}
 \|\vec{b}^{\tau,N}_j\|<\infty, \quad \|\vec{b}^{\tau,N}_j\|^2=\sum_{k=1}^\infty |{b}^{\tau,N}_{kj}|^2,\quad    j=0,1,\dots,N.
\end{equation}
It follows from   conditions (\ref{coeff_a_i_n_c}) - (\ref{coeff_b_i_n_c}) that the functional
$
 H_{NT}\xi = H_{N}\vec \xi
$
has a finite second moment and the functional  $V_{NT}\zeta$ depends on the  observations of the   process $ \xi(t)+\eta(t)$ at points $t\in[\tau T d;0)$.
Therefore, the estimates $\widehat{A}_N\vec\xi$ and $\widehat{H}_N\vec\xi$ of the functionals $A_N\vec\xi$ and $H_N\vec\xi$, as well as  the mean-square errors $\Delta(f,g;\widehat{A}_{NT}\xi)=\Delta(f,g;\widehat{A}_N\vec\xi)=\mt E |A_N\vec\xi-\widehat{A}_N\vec\xi|^2$ and $\Delta(f,g;\widehat{H}_{NT}\xi)=\Delta(f,g;\widehat{H}_N\vec\xi)=\mt E
|H_N\vec\xi-\widehat{H}_N\vec\xi|^2$ of the estimates $\widehat{A}_N\vec\xi$ and $\widehat{H}_N\vec\xi$ satisfy the following relations
\be\label{mainformula_i_n_c}
    \widehat{A}_N\vec\xi=\widehat{H}_N\vec\xi-V_{NT}\zeta,\ee
\[
    \Delta(f,g;\widehat{A}_N\vec\xi)
    =\mt E |A_N\vec\xi-\widehat{A}_N\vec\xi|^2
    =\mt E|H_N\vec\xi-V_{NT}\zeta-\widehat{H}_N\vec\xi+V_{NT}\zeta|^2
    \]
    \[
    =\mt E|H_N\vec\xi-\widehat{H}_N\vec\xi|^2=\Delta(f,g;\widehat{H}_N\vec\xi).
    \]
Thus, it is enough to find the estimate of the  functional $H_N\vec\xi$.

Making use of   spectral representations (\ref{SpectrPred_incr_zeta_c}) and (\ref{SpectrPred_incr_eta_c}), onr can  obtain the following spectral representation of the  functional $H_N\vec\xi$
\[H_N\vec\xi=\int_{-\pi}^{\pi}\left(\vec{B}_{\tau,N}(e^{i\lambda})\right)^{\top}
\frac{(1-e^{-i\lambda \tau})^d}{(i\lambda)^d}d\vec{Z}_{\xi^{(d)}+\eta^{(d)}}(\lambda)
-\int_{-\pi}^{\pi}\left(\vec{A}_N(e^{i\lambda})\right)^{\top}d\vec{Z}_{\eta}(\lambda),\]
where
\[\vec{B}_{\tau,N}(e^{i\lambda})=\sum_{j=0}^{N}\vec{b}_j^{\tau,N}e^{i\lambda j}=\sum_{j=0}^{N}(D^{\tau}_N\me a_N)_je^{i\lambda j},\quad \vec{A}_N(e^{i\lambda})=\sum_{j=0}^{N}\vec{a}_je^{i\lambda j}.\]

The next step is to define the subspace of $H=L_2(\Omega,\mathcal{F},\mt P)$ generated by the observations. Consider   the closed linear subspaces
\[H^{0-}(\xi^{(d)}_{\tau}+\eta^{(d)}_{\tau})
=\overline{span}\{\xi^{(d)}_k(j,\tau)+\eta^{(d)}_k(j,\tau):k=1,\dots,\infty;j=-1,-2,-3,\dots\}\]
and
\[H^{N+}(\xi^{(d)}_{-\tau}+\eta^{(d)}_{-\tau})=\overline{span}
\{\xi^{(d)}_k(j,-\tau)+\eta^{(d)}_k(j,-\tau):k=1,\dots,\infty;j=N+1,N+2,\dots\}.\]
The equality $\xi^{(d)}_k(j,-\tau)=(-1)^d\xi^{(d)}_k(j+\tau d,\tau)$ implies
\[
    H^{N+}(\xi_{-\tau}^{(d)}+\eta_{-\tau}^{(d)})=H^{(N+\tau d)+}(\xi_{\tau}^{(d)}+\eta_{\tau}^{(d)}).\]

Define the following   closed linear subspaces of the Hilbert space
$L_2(f(\lambda)+\lambda^{2d}g(\lambda))$  of vector-valued functions endowed by the inner product $\langle g_1;g_2\rangle=\ip (g_1(\lambda))^{\top}(f(\lambda)+\lambda^{2d}g(\lambda))\overline{g_2(\lambda)}d\lambda$ which is generated by the functions
\begin{multline*}
L_2^{0-}(f(\lambda)+\lambda^{2d}g(\lambda))
\\
=\{
 e^{i\lambda j}(1-e^{-i\lambda \tau})^d\frac{1}{(i\lambda)^d}{\delta}_{k},    \, k=1,\dots,\infty;\,\, j=-1,-2,-3,\dots\}
\end{multline*}
and
\begin{multline*}
L_2^{N+}(f(\lambda)+\lambda^{2d}g(\lambda))
\\
=\{
 e^{i\lambda j}(1-e^{-i\lambda \tau})^d\frac{1}{(i\lambda)^d}{\delta}_{k},  \, k=1,\dots,\infty;\,\, \,j=N+1,N+2,\dots\},
\end{multline*}
where ${\delta}_k=\{\delta_{kl}\}_{l=1}^{\infty}$, $\delta_{kl}$ are Kronecker symbols.

Representation
 (\ref{SpectrPred_incr_zeta_c})
yields a  map between the elements $e^{i\lambda j}(1-e^{-i\lambda\tau})^d(i\lambda)^{-d}$ of the space
$$L_2^{0-}(f(\lambda)+\lambda^{2d}g(\lambda))\oplus L_2^{N+}(f(\lambda)+\lambda^{2d}g(\lambda))$$
and the elements
$\vec\xi^{(d)}_j+\vec\eta^{(d)}_j$
of the space
$$H^{0-}(\xi^{(d)}_{\tau}+\eta^{(d)}_{\tau})\oplus H^{N+}(\xi^{(d)}_{-\tau}+\eta^{(d)}_{-\tau})=
H^{0-}(\xi^{(d)}_{\tau}+\eta^{(d)}_{\tau})\oplus H^{(N+\tau d)+}(\xi^{(d)}_{\tau}+\eta^{(d)}_{\tau}).$$

Relation (\ref{mainformula_i_n_c}) implies that a linear estimate $\widehat{A}_N\vec\xi$ of  $A_N\vec\xi$
can be represented as
\be \label{otsinka_A_N_i_n_c}
 \widehat{A}_N\vec\xi=\ip
(\vec{h}_{\tau,N}(\lambda))^{\top}d\vec{Z}_{\xi^{(d)}+\eta^{(d)}}(\lambda)-\int_{-\tau T d}^{0}v_{\tau,N}(t)\zeta(t)dt,
\ee
 where
$\vec{h}_{\tau,N}(\lambda)=\{h_{k}^{\tau,N}(\lambda)\}_{k=1}^{\infty}$ is the spectral characteristic of the   estimate $\widehat{H}_N\vec\xi$.

Now we can apply  the Hilbert space projection method proposed by Kolmogorov \cite{Kolmogorov}.
The mean square optimal estimate
$\widehat{H}_N\vec\xi$ is found  as a projection of the element $H_N\vec\xi$ on the
subspace $H^{0-}(\xi^{(d)}_{\tau}+\eta^{(d)}_{\tau})\oplus H^{(N+\tau d)+}(\xi^{(d)}_{\tau}+\eta^{(d)}_{\tau})$. This projection is
characterized by two conditions:

1) $ \widehat{H}_N\vec\xi\in H^{0-}(\xi^{(d)}_{\tau}+\eta^{(d)}_{\tau})\oplus H^{(N+\tau d)+}(\xi^{(d)}_{\tau}+\eta^{(d)}_{\tau}) $;

2) $(H_N\vec\xi-\widehat{H}_N\vec\xi)
\perp
H^{0-}(\xi^{(d)}_{\tau}+\eta^{(d)}_{\tau})\oplus H^{(N+\tau d)+}(\xi^{(d)}_{\tau}+\eta^{(d)}_{\tau})$.

The following relation follows from condition 2) and   holds true for all $j\leq-1$ and $j\geq N+\tau d+1$
\begin{multline}
\int_{-\pi}^{\pi}
\ld[\ld(\vec{B}_{\tau,N}(e^{i\lambda})\frac{(1-e^{-i\lambda\tau})^d}{(i\lambda)^d}-
\vec{h}_{\tau,N}(\lambda)\rd)^{\top}
p(\lambda)-
(\vec{A}_N(e^{i\lambda}))^{\top}g(\lambda)(-i\lambda)^d\rd]\times
\\
\times
\frac{(1-e^{i\lambda\tau})^d}{(-i\lambda)^{d}}e^{-i\lambda j}d\lambda=0.
\end{multline}
Thus, the spectral characteristic $\vec{h}_{\tau,N}(\lambda)$ of the estimate
$\widehat{H}_N\vec\xi$ can be represented as
\begin{multline}
(\vec{h}_{\tau,N}(\lambda))^{\top}=
(\vec{B}_{\tau,N}(e^{i\lambda}))^{\top}
\frac{(1-e^{-i\lambda \tau})^d}{(i\lambda)^d}
\\
-
\ld((\vec{A}_{\tau,N}(e^{i\lambda}))^{\top}g(\lambda)+(\vec{C}_{\tau,N}(e^{i\lambda}))^{\top}\rd)\frac{(-i\lambda)^{d}}{(1-e^{i\lambda \tau})^d}
(f(\lambda)+\lambda^{2d}g(\lambda))^{-1},
 \label{spectr_A_N_i_n_c}
\end{multline}
where
\[
\vec{A}_{\tau,N}(e^{i\lambda})=(1-e^{i\lambda \tau})^d\vec{A}_{N}(e^{i\lambda})=\sum_{j=0}^{N+\tau d}\vec{a}_j^{\tau,N}e^{i\lambda j},\quad \vec{C}_{\tau,N}(e^{i \lambda})=\sum_{j=0}^{N+\tau d}\vec{c}_j^{\tau,N}e^{i\lambda j},
\]
the coefficients $\{\vec{a}_j^{\tau,N}:0\leq j\leq N+\tau d\}$ are calculated by the formula
\be\label{coeff a_N_mu_i_n_c}
    \vec{a}_j^{\tau,N}
    =\sum_{l=\max\ld\{\ld\lceil\frac{j-N}{\tau}\rd\rceil,0\rd\}}^{\min\ld\{\ld[\frac{j}{\tau}\rd],d\rd\}}
    (-1)^l{d \choose l}\vec{a}(j-\tau l),\quad 0\leq j\leq N+\tau d,\ee
the coefficients  $\vec{c}_j^{\tau,N}=\{c_{kj}^{\tau,N}\}_{k=1}^{\infty}, j=0,1,\dots,N+\tau d,$  are unknown coefficients to be found.

Condition 1) implies that the following equations should be satisfied
\[
\int_{-\pi}^{\pi} \biggl[(\vec{B}_{\tau,N}(e^{i\lambda}))^{\top}-
(\vec{A}_{\tau,N}(e^{i\lambda}))^{\top}g(\lambda)
\frac{\lambda^{2d}}{(1-e^{-i\lambda \tau})^d(1-e^{i\lambda \tau})^d}
(f(\lambda)+\lambda^{2d}g(\lambda))^{-1}
-
\]
\begin{equation} \label{eq_C_N_i_n_c}
-
\frac{\lambda^{2d}(\vec{C}_{\tau,N}(e^{i\lambda}))^{\top}}{(1-e^{-i\lambda \tau})^d (1-e^{i\lambda \tau})^d}(f(\lambda)+\lambda^{2d}g(\lambda))^{-1}\biggr]e^{-i\lambda j}d\lambda=0,\,  0\leq j\leq N+\tau d.
\end{equation}

 Define for $0\leq l,j \leq N+\tau d$ the Fourier coefficients of the corresponding functions
\begin{eqnarray*}
T^{\tau}_{l,j}&=&\frac{1}{2\pi}\int_{-\pi}^{\pi}
e^{i\lambda(j-l)}\frac{\lambda^{2d}}{|1-e^{i\lambda\tau}|^{2d}}
\ld[g(\lambda)(f(\lambda)+\lambda^{2d}g(\lambda))^{-1}\rd]^{\top}
d\lambda;
\\
P_{l,j}^{\tau}&=&\frac{1}{2\pi}\int_{-\pi}^{\pi} e^{i\lambda (j-l)}
\dfrac{\lambda^{2d}}{|1-e^{i\lambda\tau}|^{2d}}\ld[(f(\lambda)+\lambda^{2d}g(\lambda))^{-1}\rd]^{\top}
d\lambda;
\\
 Q_{l,j}&=&\frac{1}{2\pi}\int_{-\pi}^{\pi}
e^{i\lambda(j-l)}\ld[f(\lambda)(f(\lambda)+\lambda^{2d}g(\lambda))^{-1}g(\lambda)\rd]^{\top}d\lambda.
\end{eqnarray*}

  \noindent Making use of the defined Fourier coefficients, relation \eqref{eq_C_N_i_n_c} can be presented as a system of $N+\tau d+1$ linear equations
  determining the unknown coefficients $\vec{c}_j^{\tau,N}$, $0\leq j\leq N+\tau d$.
\begin{eqnarray*}\label{linear equations1_i_n_c}
    \vec{b}_l^{\tau,N}-\sum_{ j=0}^{N+\tau d}T^{\tau}_{l,j}\vec{a}_j^{\tau,N}
    &=&\sum_{j=0}^{N+\tau d}P_{l,j}^{\tau}\vec{c}_j^{\tau,N},\quad 0\leq l\leq N,
\\
\label{linear equations2_i_n_c}
    -\sum_{ j=0}^{N+\tau d}T^{\tau}_{l,j}\vec{a}_j^{\tau,N}
    &=&\sum_{j=0}^{N+\tau d}P_{l,j}^{\tau}\vec{c}_j^{\tau,N},\quad N+1\leq l\leq N+\tau d.
\end{eqnarray*}

  \noindent Denote by $[D_N^{\tau}\me a_N]_{+\tau d}$ a vector of dimension  $N+\tau d +1$ with infinite dimensional entries, which is constructed by adding
$\tau d$ infinite dimensional zero vectors  to the vector $D_N^{\tau}\me a_N$ of dimension  $N+1$.
Then  the system (\ref{linear equations1_i_n_c}) -- (\ref{linear equations2_i_n_c}) can be represented in the matrix form
\[
    [D_N^{\tau}\me a_N]_{+\tau d}-\me T^{\tau}_N\me a^{\tau}_N=\me P^{\tau}_N\me c^{\tau}_N,\]
where
\begin{eqnarray*}
    \me a^{\tau}_N&=&((\vec{a}_0^{\tau,N})^{\top},(\vec{a}_1^{\tau,N})^{\top},(\vec{a}_2^{\tau,N})^{\top},\ldots,
(\vec{a}_{N+\tau d}^{\tau,N})^{\top})^{\top}
\\
    \me c^{\tau}_N&=&((\vec{c}_0^{\tau,N})^{\top},(\vec{c}_1^{\tau,N})^{\top},(\vec{c}_2^{\tau,N})^{\top},\ldots,
(\vec{c}_{N+\tau d}^{\tau,N})^{\top})^{\top}
\end{eqnarray*}
are  vectors of  dimension $N+\tau d+1$ with the infinite dimensional entries; $\me P^{\tau}_N$ and $\me T^{\tau}_N$ are matrices of dimension $(N+\tau d+1)\times (N+\tau d+1)$ with the infinite dimensional matrix entries $(\me P^{\tau}_N)_{l,j}=P_{j,l}^{\tau}$ and $(\me T^{\tau}_N)_{l,j} =T^{\tau}_{l,j}$, $0\leq l,j\leq N+\tau d$.

Thus, the coefficients $\vec{c}_j^{\tau,N}$, $0\leq j\leq N+\tau d$, are determined by the formula
\[
  \vec{c}_j^{\tau,N}=\ld((\me P^{\tau}_N)^{-1}[D_N^{\tau}\me a_N]_{+\tau d}-(\me P^{\tau}_N)^{-1}\me T^{\tau}_N\me a^{\tau}_{N}\rd)_j,\quad 0\leq j\leq N+\tau d,\]
where $\ld((\me P^{\tau}_N)^{-1}[D_N^{\tau}\me a_N]_{+\tau d}-(\me P^{\tau}_N)^{-1}\me T^{\tau}_N\me a^{\tau}_N\rd)_j$, $0\leq j\leq N+\tau d$, is the
 $j$th element of the vector  $(\me P^{\tau}_N)^{-1}[D_N^{\tau}\me a_N]_{+\tau d}-(\me P^{\tau}_N)^{-1}\me T^{\tau}_N\me a^{\tau}_N$.

The existence of the inverse matrix $(\me P^{\tau}_N)^{-1}$ was justified  in \cite{Luz_Mokl_book} under condition (\ref{umova11_i_n_c}).

The spectral characteristic  $\vec{h}_{\tau,N}(\lambda)$ of the estimate $\widehat{H}_N\xi$ of the functional $H_N\xi$ is calculated by formula (\ref{spectr_A_N_i_n_c}), where
\be\label{spectr_C_N_i_n_c}
 \vec{C}_{\tau,N}(e^{i \lambda})=\sum_{j=0}^{N+\tau d}
    \ld((\me P^{\tau}_N)^{-1}[D_N^{\tau}\me a_N]_{+\tau d}-(\me P^{\tau}_N)^{-1}\me T^{\tau}_N\me a^{\tau}_N\rd)_j e^{i\lambda j}.
\ee

Denote
\begin{eqnarray*}
\me C^{f}_{\tau,N}(e^{i\lambda})
&=&
\overline{g(\lambda)}\vec{A}_{\tau,N}(e^{i\lambda}) +
\vec{C}_{\tau,N}(e^{i \lambda}),
\\
{\me C}^{g}_{\tau,N}(e^{i \lambda})
&=&
{|1-e^{i\lambda \tau}|^{2n}}\lambda^{-2n}\overline{f^0(\lambda)}\vec{A}_N(e^{i\lambda})
-{(1-e^{-i\lambda \tau})^n}
\vec{C}_{\tau,N}(e^{i \lambda}).
\end{eqnarray*}

The value of the  mean-square errors of the estimates $\widehat{A}_{NT}\xi$ and $\widehat{H}_N\vec\xi$ can be calculated by the formula
\begin{eqnarray}
\notag
\Delta(f,g;\widehat{A}_{NT}\xi)&=&\Delta(f,g;\widehat{H}_N\vec\xi)= \mt E|H_N\vec\xi-\widehat{H}_N\vec\xi|^2
\\\notag&=&
\frac{1}{2\pi}\int_{-\pi}^{\pi}
\frac{\lambda^{2d}}{|1-e^{i\lambda\tau}|^{2d}}
(\me C^{f}_{\tau,N}(e^{i\lambda}))^{\top}(f(\lambda)+\lambda^{2d}g(\lambda))^{-1}\, f(\lambda)\,
\\\notag
&&\quad
\times
(f(\lambda)+\lambda^{2d}g(\lambda))^{-1}\overline{\me C^{f}_{\tau,N}(e^{i\lambda})}
d\lambda
\\\notag
&&
+\frac{1}{2\pi}\int_{-\pi}^{\pi}
\frac{\lambda^{4d}}{|1-e^{i\lambda\tau}|^{4d}}
({\me C}^{g}_{\tau,N}(e^{i \lambda}))^{\top}(f(\lambda)+\lambda^{2d}g(\lambda))^{-1}\,g(\lambda)\,
\\\notag
&&\quad
\times
(f(\lambda)+\lambda^{2d}g(\lambda))^{-1}\overline{{\me C}^{g}_{\tau,N}(e^{i \lambda})}
d\lambda
\\\notag
&=& \ld\langle [D_N^{\tau}\me a_N]_{+\tau d}- \me T^{\tau}_N\me
    a^{\tau}_N,(\me P^{\tau}_N)^{-1}[D_N^{\tau}\me a_N]_{+\tau d}
    -(\me P^{\tau}_N)^{-1}\me T^{\tau}_N\me a^{\tau}_N\rd\rangle
    \\
&&+\langle\me Q_N\me a_N,\me
    a_N\rangle,
    \label{pohybka_A_N_i_n_c}
\end{eqnarray}
where $\me Q_N$ is a matrix of the dimension $(N+1)\times(N+1)$ with the infinite dimensional matrix entries $(\me Q_N)_{j,l}=Q_{j,l}$, $0\leq j,l\leq N$. Thus, the following theorem holds true.

\begin{thm}\label{thm_A_N_i_n_c}
Consider uncorrelated  a stochastic
process $\xi(t)$, $t\in \mr R$ with a periodically stationary  increments and   a periodically stationary stochastic
process $\eta(t)$, $t\in \mr R$, which determine a
generated stationary  $d$th increment sequence
$\vec{\xi}^{(d)}_j$ with the spectral density matrix $f(\lambda)=\{f_{kn}(\lambda)\}_{k,n=1}^{\infty}$ and
   a
generated stationary  sequence
$\vec{\eta}_j$ with the spectral density matrix $g(\lambda)=\{g_{kn}(\lambda)\}_{k,n=1}^{\infty}$ respectively.
Let the coefficients $\vec {a}_j$, $\vec{b}^{\tau,N}_j$, $j=0,1,\ldots,N$ generated by the function $a(t)$, $t\geq 0$, satisfy conditions  (\ref{coeff_a_i_n_c}) - (\ref{coeff_b_i_n_c}).
 Let  minimality condition
(\ref{umova11_i_n_c}) be satisfied. The
optimal linear estimate $\widehat{A}_{NT}\xi$ of the functional
$A_{NT}\xi$  based on
observations of the process $\xi(t)+\eta(t)$ at points   $t\in \mr R\setminus[0;(N+1)T]$
is calculated by  formula (\ref{otsinka_A_N_i_n_c}).
The spectral characteristic $\vec{h}_{\tau,N}(\lambda)$ of the optimal
estimate $\widehat{A}_{NT}\xi$ and the mean-square error
$\Delta(f,g;\widehat{A}_{NT}\xi)$ are  calculated by  formulas (\ref{spectr_A_N_i_n_c}) -- (\ref{spectr_C_N_i_n_c}) and (\ref{pohybka_A_N_i_n_c}) respetively.
\end{thm}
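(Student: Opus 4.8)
The plan is to obtain the theorem as the assembly of the reduction and projection arguments developed above, so that the proof amounts to organising those steps in the right order. First I would invoke Lemma~\ref{nas predst A_T_cont} together with the lemma that follows it to rewrite the functional $A_{NT}\xi$, which does not lie in $H=L_2(\Omega,\mathcal F,\mt P)$, as the difference $H_{NT}\xi-V_{NT}\zeta=H_N\vec\xi-V_{NT}\zeta$, where $H_N\vec\xi$ has finite second moment owing to conditions (\ref{coeff_a_i_n_c})--(\ref{coeff_b_i_n_c}) and $V_{NT}\zeta$ depends only on the observed values $\zeta(t)$, $t\in[-\tau Td;0)$. Relation (\ref{mainformula_i_n_c}) then reduces the estimation of $A_N\vec\xi$ to that of $H_N\vec\xi$, since their mean-square errors coincide, and it yields the form (\ref{otsinka_A_N_i_n_c}) of the optimal estimate once the spectral characteristic $\vec h_{\tau,N}$ of $\widehat H_N\vec\xi$ is known.

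Next I would pass to the spectral domain. Using the spectral representations (\ref{SpectrPred_incr_zeta_c}) and (\ref{SpectrPred_incr_eta_c}), the functional $H_N\vec\xi$ acquires an integral representation in terms of $\vec Z_{\xi^{(d)}+\eta^{(d)}}$ and $\vec Z_\eta$, while the admissible estimates are the integrals over the orthogonal sum of the observation subspaces $L_2^{0-}(f(\lambda)+\lambda^{2d}g(\lambda))\oplus L_2^{N+}(f(\lambda)+\lambda^{2d}g(\lambda))$. I would then apply the Kolmogorov projection method: the optimal estimate $\widehat H_N\vec\xi$ is the orthogonal projection of $H_N\vec\xi$ onto $H^{0-}(\xi^{(d)}_\tau+\eta^{(d)}_\tau)\oplus H^{(N+\tau d)+}(\xi^{(d)}_\tau+\eta^{(d)}_\tau)$, characterised by the membership condition~1) and the orthogonality condition~2). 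Writing out condition~2) against every generating element of the observation subspace gives the representation (\ref{spectr_A_N_i_n_c}) of $\vec h_{\tau,N}$, still involving the unknown coefficient function $\vec C_{\tau,N}(e^{i\lambda})$.

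The coefficients $\vec c_j^{\tau,N}$ are then determined by imposing condition~1), which forces the Fourier coefficients of the bracketed expression in (\ref{eq_C_N_i_n_c}) to vanish for $0\le j\le N+\tau d$. Introducing the operator Fourier coefficients $T^\tau_{l,j}$, $P^\tau_{l,j}$ and $Q_{l,j}$, this becomes a block system of $N+\tau d+1$ linear equations, equivalently the matrix equation $[D_N^\tau\me a_N]_{+\tau d}-\me T^\tau_N\me a^\tau_N=\me P^\tau_N\me c^\tau_N$; solving for $\me c^\tau_N$ produces (\ref{spectr_C_N_i_n_c}). Substituting the optimal $\vec h_{\tau,N}$ back into $\mt E|H_N\vec\xi-\widehat H_N\vec\xi|^2$ and separating the $f$- and $g$-contributions through the operators $\me C^f_{\tau,N}(e^{i\lambda})$ and $\me C^g_{\tau,N}(e^{i\lambda})$ yields the error formula (\ref{pohybka_A_N_i_n_c}). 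The main obstacle is to guarantee that the operator matrix $\me P^\tau_N$ is invertible, so that $\me c^\tau_N$ is uniquely determined; this is exactly where the minimality condition (\ref{umova11_i_n_c}) is needed, ensuring the existence of $(\me P^\tau_N)^{-1}$ as established in \cite{Luz_Mokl_book} and that the interpolation error does not degenerate to zero.
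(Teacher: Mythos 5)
Your proposal is correct and follows essentially the same route as the paper: reduction of $A_{NT}\xi$ to $H_N\vec\xi$ via Lemma~\ref{nas predst A_T_cont} and its companion lemma, the Kolmogorov projection characterisation with condition~2) yielding (\ref{spectr_A_N_i_n_c}), condition~1) yielding the linear system solved by $(\me P^{\tau}_N)^{-1}$ under the minimality condition (\ref{umova11_i_n_c}), and substitution back to obtain (\ref{pohybka_A_N_i_n_c}). This matches the paper's own derivation step for step, including the appeal to \cite{Luz_Mokl_book} for the invertibility of $\me P^{\tau}_N$.
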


\begin{nas}
The spectral characteristic $\vec{h}_{\tau,N}(\lambda)$ (\ref{spectr_A_N_i_n_c}) admits the representation
$\vec{h}_{\tau,N}(\lambda)=\vec{h}_{\tau,N}^{1}(\lambda)-\vec{h}_{\tau,N}^2(\lambda)$, where
\begin{multline*}
(\vec{h}_{\tau}^1(\lambda))^{\top}=(\vec{B}_{\tau,N}(e^{i\lambda}))^{\top}
\frac{(1-e^{-i\lambda \tau})^d}{(i\lambda)^d}-
\\
-\frac{(-i\lambda)^{d}}{(1-e^{i\lambda \tau})^d}
\left(
\sum_{j=0}^{N+\tau d}
    \ld((\me P^{\tau}_N)^{-1}[D_N^{\tau}\me a_N]_{+\tau d}\rd)_j e^{i\lambda j}
\right)^{\top}
(f(\lambda)+\lambda^{2d}g(\lambda))^{-1},
\end{multline*}
\begin{multline*}
(\vec{h}_{\tau,N}^2(\lambda))^{\top}=
(\vec{A}_N(e^{i\lambda }))^{\top}
 {(-i\lambda)^dg(\lambda)}(f(\lambda)+\lambda^{2d}g(\lambda))^{-1}-
 \\
-\frac{(-i\lambda)^{d}}{(1-e^{i\lambda \tau})^d}
\left(
\sum_{j=0}^{N+\tau d}
    \ld((\me P^{\tau}_N)^{-1}\me T^{\tau}_N\me a^{\tau}_N\rd)_j e^{i\lambda j}
\right)^{\top}
(f(\lambda)+\lambda^{2d}g(\lambda))^{-1}.
\end{multline*}

Here $\vec{h}_{\tau,N}^1(\lambda)$ and $\vec{h}_{\tau,N}^2(\lambda)$ are spectral characteristics of the optimal estimates
$\widehat{B}_N\vec\zeta$ and $\widehat{A}_N\vec\eta$ of the functionals $B_N\vec\zeta$ and $A_N\vec\eta$ respectively based on observations
$\xi(t)+\eta(t)$ at points of the set $t\in \mr R\setminus[0;(N+1)T]$.
\end{nas}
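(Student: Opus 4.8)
The plan is to prove the decomposition in two stages: first an algebraic regrouping of the terms in the spectral characteristic \eqref{spectr_A_N_i_n_c}, and then an identification of the two resulting summands as the projection spectral characteristics of the functionals $B_N\vec\zeta$ and $A_N\vec\eta$ separately.

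For the regrouping I would split the coefficient vector in \eqref{spectr_C_N_i_n_c} additively as $\vec C_{\tau,N}(e^{i\lambda})=\vec C^{1}_{\tau,N}(e^{i\lambda})-\vec C^{2}_{\tau,N}(e^{i\lambda})$, where $\vec C^{1}_{\tau,N}(e^{i\lambda})=\sum_{j=0}^{N+\tau d}\ld((\me P^{\tau}_N)^{-1}[D_N^{\tau}\me a_N]_{+\tau d}\rd)_j e^{i\lambda j}$ collects the $\vec b$-part and $\vec C^{2}_{\tau,N}(e^{i\lambda})=\sum_{j=0}^{N+\tau d}\ld((\me P^{\tau}_N)^{-1}\me T^{\tau}_N\me a^{\tau}_N\rd)_j e^{i\lambda j}$ collects the $\vec a$-part. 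Substituting this split into \eqref{spectr_A_N_i_n_c} and using the identity $\vec A_{\tau,N}(e^{i\lambda})=(1-e^{i\lambda\tau})^d\vec A_N(e^{i\lambda})$, which turns $(\vec A_{\tau,N}(e^{i\lambda}))^{\top}g(\lambda)\frac{(-i\lambda)^d}{(1-e^{i\lambda\tau})^d}$ into $(\vec A_N(e^{i\lambda}))^{\top}(-i\lambda)^d g(\lambda)$, the terms carrying $\vec B_{\tau,N}$ and $\vec C^{1}_{\tau,N}$ assemble into $\vec h^1_{\tau,N}$ while the terms carrying $(\vec A_N)^{\top}(-i\lambda)^d g(\lambda)$ and $\vec C^{2}_{\tau,N}$ assemble into $\vec h^2_{\tau,N}$, giving $\vec h_{\tau,N}=\vec h^1_{\tau,N}-\vec h^2_{\tau,N}$. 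This step is a routine rearrangement.

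To identify the two summands I would appeal to the linearity of the orthogonal projection onto $H^{0-}(\xi^{(d)}_{\tau}+\eta^{(d)}_{\tau})\oplus H^{(N+\tau d)+}(\xi^{(d)}_{\tau}+\eta^{(d)}_{\tau})$: since $H_N\vec\xi=B_N\vec\zeta-A_N\vec\eta$, the optimal estimate satisfies $\widehat H_N\vec\xi=\widehat B_N\vec\zeta-\widehat A_N\vec\eta$, and then I would re-run the derivation of \eqref{spectr_A_N_i_n_c} separately for each sub-functional. For $B_N\vec\zeta=\ip(\vec B_{\tau,N}(e^{i\lambda}))^{\top}\frac{(1-e^{-i\lambda\tau})^d}{(i\lambda)^d}d\vec Z_{\xi^{(d)}+\eta^{(d)}}(\lambda)$, a functional of the observed combined increments, the orthogonality relation in condition 2) carries no $g$-weighted $\vec A_N$ contribution, so condition 1) produces the system $\me P^{\tau}_N\me c=[D_N^{\tau}\me a_N]_{+\tau d}$ whose solution is exactly the vector in $\vec h^1_{\tau,N}$. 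For $A_N\vec\eta=\ip(\vec A_N(e^{i\lambda}))^{\top}d\vec Z_{\eta}(\lambda)$ the $\vec B_{\tau,N}$ term is absent, the cross-covariance of $d\vec Z_{\eta}$ with the observed increments supplies the leading term $(\vec A_N)^{\top}(-i\lambda)^d g(\lambda)(f(\lambda)+\lambda^{2d}g(\lambda))^{-1}$, and condition 1) produces the system $\me P^{\tau}_N\me c=\me T^{\tau}_N\me a^{\tau}_N$ whose solution is the vector in $\vec h^2_{\tau,N}$. Matching these re-derived characteristics against the algebraic summands then completes the identification.

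The step I expect to require the most care is the decoupling of the coefficient systems together with the cross-covariance bookkeeping for $A_N\vec\eta$. Both sub-problems share the same matrix $\me P^{\tau}_N$, since it originates in the common weight $(f(\lambda)+\lambda^{2d}g(\lambda))^{-1}$ entering condition 1), while the right-hand sides separate additively into $[D_N^{\tau}\me a_N]_{+\tau d}$ for $B_N\vec\zeta$ and $\me T^{\tau}_N\me a^{\tau}_N$ for $A_N\vec\eta$, exactly reproducing the combined system obtained from \eqref{eq_C_N_i_n_c}. The only place where the noise structure genuinely enters is the computation of $\mt E[d\vec Z_{\eta}(\lambda)\overline{d\vec Z_{\xi^{(d)}+\eta^{(d)}}(\lambda)}]$: using that $\xi(t)$ and $\eta(t)$ are uncorrelated and that $d\vec Z_{\eta^{(d)}}(\lambda)=(i\lambda)^d d\vec Z_{\eta}(\lambda)$, this reduces to the $g$-part weighted by $(-i\lambda)^d$, which is precisely what generates the Fourier coefficients $T^{\tau}_{l,j}$ and the leading $g$-term of $\vec h^2_{\tau,N}$ without any spurious contributions. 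Confirming this establishes that $\vec h^1_{\tau,N}$ and $\vec h^2_{\tau,N}$ are the spectral characteristics of $\widehat B_N\vec\zeta$ and $\widehat A_N\vec\eta$ respectively.
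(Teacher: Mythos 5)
Your proposal is correct and follows exactly the route the paper intends: the paper states this corollary without proof, as an immediate consequence of Theorem~\ref{thm_A_N_i_n_c}, obtained by splitting the coefficient vector $(\me P^{\tau}_N)^{-1}[D_N^{\tau}\me a_N]_{+\tau d}-(\me P^{\tau}_N)^{-1}\me T^{\tau}_N\me a^{\tau}_N$ in (\ref{spectr_C_N_i_n_c}) into its two summands, regrouping (\ref{spectr_A_N_i_n_c}) via $\vec{A}_{\tau,N}(e^{i\lambda})=(1-e^{i\lambda\tau})^d\vec{A}_N(e^{i\lambda})$, and identifying each part, by linearity of the projection, with the solution of the corresponding sub-problem for $B_N\vec\zeta$ (right-hand side $[D_N^{\tau}\me a_N]_{+\tau d}$, no cross term) and for $A_N\vec\eta$ (cross-covariance term $(-i\lambda)^d g(\lambda)$, right-hand side $\me T^{\tau}_N\me a^{\tau}_N$). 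Your filled-in details, including the shared matrix $\me P^{\tau}_N$ and the cross-covariance bookkeeping $d\vec{Z}_{\eta^{(d)}}(\lambda)=(i\lambda)^d d\vec{Z}_{\eta}(\lambda)$, are precisely what the paper's derivation of (\ref{eq_C_N_i_n_c}) yields when specialized to each sub-functional.
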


\section{Minimax-robust method of estimation}\label{minimax_estimation_i_n_c}

The values of the mean square errors and the spectral characteristics of the optimal estimate
of the functional ${A}_{NT}\xi$
depending on the unobserved values of a stochastic process ${\xi}(t)$ which determine a generated stationary stochastic $d$th increment sequence
$\vec{\xi}^{(d)}_j$ with the spectral density matrix $f(\lambda)$
based on observations of the process
$\xi(t)+\eta(t)$ at points $\mr R\setminus[0;(N+1)T]$ can be calculated by formulas
(\ref{spectr_A_N_i_n_c}), (\ref{spectr_C_N_i_n_c}), \eqref{pohybka_A_N_i_n_c}
respectively, under the condition that
spectral densities
$f(\lambda)$ and $g(\lambda)$ of stochastic sequences $\vec\xi_j$ and
$\vec\eta_j$ are exactly known.

In practical cases, however, spectral densities  usually are not exactly known.
If in such cases a set $\md D=\md D_f\times\md D_g$ of admissible spectral densities is defined,
the minimax-robust approach to
estimation of linear functionals depending on unobserved values of stochastic sequences with stationary increments may be applied.
This method consists in finding an estimate that minimizes
the maximal values of the mean square errors for all spectral densities
from a given class $\md D=\md D_f\times\md D_g$ of admissible spectral densities
simultaneously.

To formalize this approach we present the following definitions.

\begin{ozn}
For a given class of spectral densities $\mathcal{D}=\md
D_f\times\md D_g$ the spectral densities
$f_0(\lambda)\in\mathcal{D}_f$, $g_0(\lambda)\in\md D_g$ are called
least favorable in the class $\mathcal{D}$ for the optimal linear
estimation of the functional $A_N\vec \xi$  if the following relation holds
true:
\[\Delta(f_0,g_0)=\Delta(h(f_0,g_0);f_0,g_0)=
\max_{(f,g)\in\mathcal{D}_f\times\md
D_g}\Delta(h(f,g);f,g).\]
\end{ozn}

\begin{ozn}
For a given class of spectral densities $\mathcal{D}=\md
D_f\times\md D_g$ the spectral characteristic $h^0(\lambda)$ of
the optimal linear estimate of the functional $A_N\vec \xi$ is called
minimax-robust if there are satisfied the conditions
\[h^0(\lambda)\in H_{\mathcal{D}}=\bigcap_{(f,g)\in\mathcal{D}_f\times\md D_g}L_2^{0-}(f(\lambda)+\lambda^{2d}g(\lambda))\]
and
\[\min_{h\in H_{\mathcal{D}}}\max_{(f,g)\in \mathcal{D}_f\times\md D_g}\Delta(h;f,g)=\max_{(f,g)\in\mathcal{D}_f\times\md
D_g}\Delta(h^0;f,g).\]
\end{ozn}

Taking into account the introduced definitions and the derived relations we can verify that the following lemma holds true.

\begin{lema}
The spectral densities $f^0\in\mathcal{D}_f$,
$g^0\in\mathcal{D}_g$ which satisfy   minimality condition (\ref{umova11_i_n_c})
are least favorable in the class $\md D=\md D_f\times\md D_g$ for
the optimal linear estimation of the functional $A_N\vec\xi$ based on observations of the process $\xi(t)+\eta(t)$
at points   $t\in\mr R\setminus[0;(N+1)T]$
if the matrices $ (\me P^{\tau}_N)^0$, $(\me T^{\tau}_N)^0$, $(\me Q_N)^0$ whose entries are defined by the Fourier coefficients of the functions
\[
\frac{\lambda^{2d}}{|1-e^{i\lambda\tau}|^{2d}}\ld[g^0(\lambda)
(f^0(\lambda)+\lambda^{2d}g^0(\lambda))^{-1}\rd]^{\top},\quad
\dfrac{\lambda^{2d}}{|1-e^{i\lambda\tau}|^{2d}}\ld[(f^0(\lambda)+\lambda^{2d}g^0(\lambda))^{-1}\rd]^{\top},
\]
\[
\ld[f^0(\lambda)(f^0(\lambda)+\lambda^{2d}g^0(\lambda))^{-1}g^0(\lambda)\rd]^{\top}
\]
determine a solution of the constraint optimisation problem

\begin{multline*}
   \max_{(f,g)\in \mathcal{D}_f\times\md D_g}
   \ld(\ld\langle [D_N^{\tau}\me a_N]_{+\tau d}- \me T^{\tau}_N\me
    a_{\tau},(\me P^{\tau}_N)^{-1}[D_N^{\tau}\me a_N]_{+\tau d}-
       (\me P^{\tau}_N)^{-1}\me T^{\tau}_N\me a^{\tau}_N\rd\rangle\right.
       \\
      +\left.\langle\me Q_N\me
    a_N,\me a_N\rangle\rd)=
\end{multline*}
\begin{multline}
    = \ld\langle [D_N^{\tau}\me a_N]_{+\tau d}- (\me T^{\tau}_N)^0\me
    a^{\tau}_N,((\me P^{\tau}_N)^0)^{-1}[D_N^{\tau}\me a_N]_{+\tau d}-
 ((\me P^{\tau}_N)^0)^{-1}(\me T^{\tau}_N)^0\me a^{\tau}_N
    \rd\rangle
    \\
    +
        \langle\me Q^0_N\me
    a_N,\me a_N\rangle.
\label{minimax1_A_N_i_n_c}
\end{multline}

The minimax spectral characteristic $h^0=h_{\tau,N}(f^0,g^0)$ is calculated by formula (\ref{spectr_A_N_i_n_c}) if
$h_{\tau,N}(f^0,g^0)\in H_{\mathcal{D}}$.
\end{lema}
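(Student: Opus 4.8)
The plan is to recognize this statement as an instance of the standard minimax (saddle-point) argument and to reduce it to the explicit error formula (\ref{pohybka_A_N_i_n_c}) already established in Theorem \ref{thm_A_N_i_n_c}. First I would observe that, for every admissible pair $(f,g)\in\md D_f\times\md D_g$ satisfying the minimality condition (\ref{umova11_i_n_c}), the inner minimization $\min_h\Delta(h;f,g)$ is solved by the projection (optimal) characteristic of Theorem \ref{thm_A_N_i_n_c}, and the resulting minimal error $\Delta(h(f,g);f,g)$ is precisely the functional of the Fourier-coefficient matrices $\me P^{\tau}_N$, $\me T^{\tau}_N$, $\me Q_N$ displayed in (\ref{pohybka_A_N_i_n_c}). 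Consequently, the extremal problem (\ref{minimax1_A_N_i_n_c}) is literally the outer maximization $\max_{(f,g)}\min_h\Delta(h;f,g)$ rewritten in terms of these matrices.

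The least-favorable assertion then follows directly from the definition. Evaluating (\ref{pohybka_A_N_i_n_c}) at $(f^0,g^0)$ shows that the right-hand side of (\ref{minimax1_A_N_i_n_c}) equals $\Delta(f^0,g^0)=\Delta(h(f^0,g^0);f^0,g^0)$, while its left-hand side is $\max_{(f,g)\in\md D}\Delta(h(f,g);f,g)$. The hypothesis that the matrices $(\me P^{\tau}_N)^0$, $(\me T^{\tau}_N)^0$, $(\me Q_N)^0$ solve (\ref{minimax1_A_N_i_n_c}) therefore says exactly that $\Delta(f^0,g^0)=\max_{(f,g)\in\md D}\Delta(h(f,g);f,g)$, which is the defining relation of least-favorable densities.

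For the minimax characteristic I would run the usual saddle-point verification under the hypothesis $h^0:=h_{\tau,N}(f^0,g^0)\in H_{\md D}$. The goal is the two-sided chain $\Delta(h;f^0,g^0)\ge\Delta(h^0;f^0,g^0)\ge\Delta(h^0;f,g)$ for all admissible $h$ and all $(f,g)\in\md D$. The left inequality is immediate, since $h^0$ is by construction the Hilbert-space projection, hence the minimizer of $\Delta(\cdot;f^0,g^0)$. Using the always-true weak-duality bound $\min_{h\in H_{\md D}}\max_{(f,g)}\Delta(h;f,g)\ge\max_{(f,g)}\min_{h}\Delta(h;f,g)=\Delta(f^0,g^0)$ together with the admissibility $h^0\in H_{\md D}$, which gives $\min_{h\in H_{\md D}}\max_{(f,g)}\Delta(h;f,g)\le\max_{(f,g)}\Delta(h^0;f,g)$, would sandwich the value $\min_{h\in H_{\md D}}\max_{(f,g)}\Delta(h;f,g)$ between $\Delta(f^0,g^0)$ and $\max_{(f,g)}\Delta(h^0;f,g)$. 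Once the right inequality below is in hand, $\max_{(f,g)}\Delta(h^0;f,g)=\Delta(f^0,g^0)$, the sandwich collapses to an equality, which is exactly the minimax property of $h^0=h_{\tau,N}(f^0,g^0)$ computed from (\ref{spectr_A_N_i_n_c}).

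The main obstacle is the right inequality $\Delta(h^0;f,g)\le\Delta(h^0;f^0,g^0)$ for every $(f,g)\in\md D$. The key point I would exploit is that, for the \emph{fixed} characteristic $h^0$, the error $\Delta(h^0;f,g)$ is an affine functional of the pair $(f,g)$: in the integral expression for $\mt E|H_N\vec\xi-\widehat H_N\vec\xi|^2$ with $h^0$ held fixed, the transfer function of $H_N\vec\xi$ depends only on $a$, so the densities $f$ and $g$ enter linearly. Combining this linearity with the convexity of $\md D=\md D_f\times\md D_g$ and the maximality of $(f^0,g^0)$ in (\ref{minimax1_A_N_i_n_c}), I would argue along an arbitrary segment $(f_t,g_t)=(1-t)(f^0,g^0)+t(f,g)$, $t\in[0,1]$: the affine map $t\mapsto\Delta(h^0;f_t,g_t)$ dominates the map $t\mapsto\Delta(h(f_t,g_t);f_t,g_t)$, which is concave in $(f,g)$ as an infimum over $h$ of affine functionals; the two agree at $t=0$, where the latter attains its maximum. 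Hence the directional derivative of $\Delta(h^0;\cdot)$ at $(f^0,g^0)$ along $(f,g)-(f^0,g^0)$ is nonpositive, and for an affine functional this forces $\Delta(h^0;f,g)\le\Delta(h^0;f^0,g^0)$. Verifying this variational inequality, equivalently the saddle-point property of the pair $(h^0,(f^0,g^0))$, is the crux; everything else is bookkeeping with the formulas already derived.
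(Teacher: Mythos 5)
Your first two paragraphs are correct and are, in substance, the paper's own (largely unwritten) argument: by Theorem \ref{thm_A_N_i_n_c}, for every admissible pair $(f,g)$ satisfying (\ref{umova11_i_n_c}) the error of the optimal estimate, $\Delta(h(f,g);f,g)$, equals the functional of the matrices $\me P^{\tau}_N$, $\me T^{\tau}_N$, $\me Q_N$ displayed in (\ref{pohybka_A_N_i_n_c}); hence problem (\ref{minimax1_A_N_i_n_c}) is literally $\max_{(f,g)\in\mathcal D}\min_{h}\Delta(h;f,g)$, and a solution of it is a least favorable pair verbatim by the definition. Your weak-duality sandwich in the third paragraph is also the right frame: everything reduces to the single inequality $\Delta(h^0;f,g)\le\Delta(h^0;f^0,g^0)$ for all $(f,g)\in\mathcal D$.

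The argument you give for that inequality, however, has a genuine gap. Writing $\phi(t)=\Delta(h^0;f_t,g_t)$ (affine) and $\psi(t)=\min_h\Delta(h;f_t,g_t)$ (concave), you list the facts $\phi\ge\psi$ on $[0,1]$, $\phi(0)=\psi(0)$, $\psi(0)=\max_t\psi(t)$, and conclude $\phi'(0)\le 0$. This does not follow: a majorant touching a concave function at its maximizer can have positive slope, and indeed $\phi\ge\psi$ with $\phi(0)=\psi(0)$ only yields $\phi'(0)\ge\psi'(0^{+})$, the opposite of what you need. The pair $\psi(t)=-t$, $\phi(t)=t$ on $[0,1]$ satisfies every one of your hypotheses, yet $\phi(1)>\phi(0)$. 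The missing ingredient is an envelope (Danskin-type) identity $\phi'(0)=\psi'(0^{+})$, which rests on a property you never invoke: the best response $h(f,g)$ is the a.e.-unique orthogonal projection and depends continuously on the densities. Concretely, the repair runs: by affinity, $\psi(t)=\Delta(h_t;f_t,g_t)\ge(1-t)\Delta(h_t;f^0,g^0)+t\Delta(h_t;f,g)\ge(1-t)\psi(0)+t\Delta(h_t;f,g)$, while maximality gives $\psi(t)\le\psi(0)$; dividing by $t$ and letting $t\to0^{+}$ gives $\Delta(h^0;f,g)\le\psi(0)=\Delta(h^0;f^0,g^0)$, \emph{provided} $\Delta(h_t;f,g)\to\Delta(h^0;f,g)$, i.e.\ provided the best response converges. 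Without uniqueness this can fail outright: in the game $F(h_1,y)=y$, $F(h_2,y)=1-y$, $y\in[0,1]$ (with mixtures of $h_1,h_2$ allowed), the maximin point is $y^{*}=1/2$ and $h_1$ is a best response to it, yet $\max_y F(h_1,y)=1>1/2$, so no saddle point forms at $(h_1,y^{*})$. Note also that the paper itself does not attempt your deduction: the saddle-point inequalities following the lemma are asserted under the condition that $(f^0,g^0)$ solves the separate fixed-$h^0$ problem (\ref{cond-extr1_A_N_i_n_c}), characterized by $0\in\partial\Delta_{\mathcal D}(f^0,g^0)$ — which is exactly the inequality you are trying to prove, imposed as a hypothesis rather than derived from (\ref{minimax1_A_N_i_n_c}) by convexity alone.
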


For more detailed analysis of properties of the least favorable spectral densities and minimax-robust spectral characteristics we observe that
the minimax spectral characteristic $h^0$ and the least favourable spectral densities $(f^0,g^0)$
form a saddle
point of the function $\Delta(h;f,g)$ on the set
$H_{\mathcal{D}}\times\mathcal{D}$.

The saddle point inequalities
\[\Delta(h;f^0,g^0)\geq\Delta(h^0;f^0,g^0)\geq\Delta(h^0;f,g)
\quad\forall f\in \mathcal{D}_f,\forall g\in \mathcal{D}_g,\forall
h\in H_{\mathcal{D}}\] hold true if $h^0=h_{\tau,N}(f^0,g^0)$ and
$h_{\tau,N}(f^0,g^0)\in H_{\mathcal{D}}$, where $(f^0,g^0)$  is a
solution of the  constraint optimization problem
\be  \label{cond-extr1_A_N_i_n_c}
\widetilde{\Delta}(f,g)=-\Delta(h_{\tau,N}(f^0,g^0);f,g)\to
\inf,\quad (f,g)\in \mathcal{D},\ee
where the functional $\Delta(h_{\tau,N}(f^0,g^0);f,g)$ is calculated by the formula
\begin{eqnarray*}
\notag
&& \Delta(h_{\tau,N}(f^0,g^0);f,g)=
\\
&=&
\frac{1}{2\pi}\int_{-\pi}^{\pi}
\frac{\lambda^{2d}}{|1-e^{i\lambda\tau}|^{2d}}
(\me C^{f0}_{\tau,N}(e^{i\lambda}))^{\top}(f^0(\lambda)+\lambda^{2d}g^0(\lambda))^{-1}\, f(\lambda)\,
\\\notag
&&\quad
\times
(f^0(\lambda)+\lambda^{2d}g^0(\lambda))^{-1}\overline{\me C^{f0}_{\tau,N}(e^{i\lambda})}
d\lambda
\\\notag
&&
+\frac{1}{2\pi}\int_{-\pi}^{\pi}
\frac{\lambda^{4d}}{|1-e^{i\lambda\tau}|^{4d}}
({\me C}^{g0}_{\tau,N}(e^{i \lambda}))^{\top}(f^0(\lambda)+\lambda^{2d}g^0(\lambda))^{-1}\,g(\lambda)\,
\\\notag
&&\quad
\times
(f^0(\lambda)+\lambda^{2d}g^0(\lambda))^{-1}\overline{{\me C}^{g0}_{\tau,N}(e^{i \lambda})}
d\lambda,
\end{eqnarray*}
where
\begin{eqnarray*}
\me C^{f0}_{\tau,N}(e^{i\lambda})
&=&
\overline{g^0(\lambda)}\vec{A}_{\tau,N}(e^{i\lambda}) +
\vec{C}_{\tau,N}^0(e^{i \lambda}),
\\
{\me C}^{g0}_{\tau,N}(e^{i \lambda})
&=&
{|1-e^{i\lambda \tau}|^{2n}}\lambda^{-2n}\overline{f^0(\lambda)}\vec{A}_N(e^{i\lambda})
-{(1-e^{-i\lambda \tau})^n}
\vec{C}_{\tau,N}^0(e^{i \lambda}).
\end{eqnarray*}
and
\[
\vec{C}^0_{\tau,N}(e^{i \lambda})=\sum_{j=0}^{N+\tau d}
(((\me P^{\tau}_N)^0)^{-1}[D_N^{\tau}\me a_N]_{+\tau d}-((\me P^{\tau}_N)^0)^{-1}(\me T^{\tau}_N)^0\me a^{\tau}_N)_j e^{i\lambda j}.
\]

The constrained optimisation problem (\ref{cond-extr1_A_N_i_n_c}) is equivalent to the unconstrained optimisation problem
\be  \label{uncond-extr_A_N_i_n_c}
\Delta_{\mathcal{D}}(f,g)=\widetilde{\Delta}(f,g)+ \delta(f,g|\mathcal{D}_f\times
\mathcal{D}_g)\to\inf,\ee
 where $\delta(f,g|\mathcal{D}_f\times
\mathcal{D}_g)$ is the indicator function of the set
$\mathcal{D}=\mathcal{D}_f\times\mathcal{D}_g$.
 Solution $(f^0,g^0)$ to this unconstrained optimisation problem is characterized by the condition $0\in
\partial\Delta_{\mathcal{D}}(f^0,g^0)$, where
$\partial\Delta_{\mathcal{D}}(f^0,g^0)$ is the subdifferential of the functional $\Delta_{\mathcal{D}}(f,g)$ at point $(f^0,g^0)\in \mathcal{D}=\mathcal{D}_f\times\mathcal{D}_g$.
 This condition makes it possible to find the least favourable spectral densities in some special classes of spectral densities $\mathcal{D}=\mathcal{D}_f\times\mathcal{D}_g$.

The form of the functional $\Delta(h_{\tau,N}(f^0,g^0);f,g)$ is convenient for application the Lagrange method of indefinite multipliers for
finding solution to the problem (\ref{uncond-extr_A_N_i_n_c}).
Making use of the method of Lagrange multipliers and the form of
subdifferentials of the indicator functions $\delta(f,g|\mathcal{D}_f\times
\mathcal{D}_g)$ of the set
$\mathcal{D}_f\times\mathcal{D}_g$ of spectral densities
we describe relations that determine least favourable spectral densities in some special classes
of spectral densities (see \cite{Moklyachuk2015,Luz_Mokl_book} for additional details).

\subsection{Least favorable spectral density in classes $\md D_0 \times \md \md D_{V}^{U}$}\label{set1_A_N_i_n_c}

Consider the problem of optimal linear estimation of the functional $A_{NT}{\xi}$
 which depends on unobserved values of the process $ \xi(t)$ with periodically stationary increments based on observations of the process $ \xi(t)+ \eta(t)$
 at points   $\mr R\setminus[0;(N+1)T]$
  under the condition that the sets of admissible spectral densities $\md D_{f0}^k, \md D_{Vg}^{Uk},k=1,2,3,4$ are defined as follows:
\begin{eqnarray*}
\md D_{f0}^{1} &=& \bigg\{f(\lambda )\left|\frac{1}{2\pi} \int
_{-\pi}^{\pi}
\frac{|1-e^{i\lambda\tau}|^{2d}}{\lambda^{2d}}
f(\lambda )d\lambda  =P\right.\bigg\},
\\
 \md D_{f0}^{2} &=&\bigg\{f(\lambda )\left|\frac{1}{2\pi }
\int _{-\pi }^{\pi}
\frac{|1-e^{i\lambda\tau}|^{2d}}{\lambda^{2d}}
{\rm{Tr}}\,[ f(\lambda )]d\lambda =p\right.\bigg\},
\\
\md D_{f0}^{3} &=&\bigg\{f(\lambda )\left|\frac{1}{2\pi }
\int _{-\pi}^{\pi}
\frac{|1-e^{i\lambda\tau}|^{2d}}{\lambda^{2d}}
f_{kk} (\lambda )d\lambda =p_{k}, k=\overline{1,\infty}\right.\bigg\},
\\
\md D_{f0}^{4} &=&\bigg\{f(\lambda )\left|\frac{1}{2\pi} \int _{-\pi}^{\pi}
\frac{|1-e^{i\lambda\tau}|^{2d}}{\lambda^{2d}}
\left\langle B_{1} ,f(\lambda )\right\rangle d\lambda  =p\right.\bigg\},
\end{eqnarray*}
and
\begin{eqnarray*}
 {\md D_{Vg}^{U1}}&=&\left\{g(\lambda )\bigg|V(\lambda )\leq g(\lambda
)\leq U(\lambda ), \frac{1}{2\pi } \int _{-\pi}^{\pi}
g(\lambda )d\lambda=Q\right\},
\\
  {\md D_{Vg}^{U2}} &=&\bigg\{g(\lambda )\bigg|{\mathrm{Tr}}\, [V(\lambda
)]\leq {\mathrm{Tr}}\,[ g(\lambda )]\leq {\mathrm{Tr}}\, [U(\lambda )],
\frac{1}{2\pi } \int _{-\pi}^{\pi}
{\mathrm{Tr}}\,  [g(\lambda)]d\lambda  =q \bigg\},
\\
{\md D_{Vg}^{U3}} &=&\bigg\{g(\lambda )\bigg|v_{kk} (\lambda )  \leq
g_{kk} (\lambda )\leq u_{kk} (\lambda ),
\frac{1}{2\pi} \int _{-\pi}^{\pi}
g_{kk} (\lambda
)d\lambda  =q_{k} , k=\overline{1,\infty}\bigg\},
\\
{\md D_{Vg}^{U4}} &=&\bigg\{g(\lambda )\bigg|\left\langle B_{2}
,V(\lambda )\right\rangle \leq \left\langle B_{2},g(\lambda
)\right\rangle \leq \left\langle B_{2} ,U(\lambda)\right\rangle,
\frac{1}{2\pi }
\int _{-\pi}^{\pi}
\left\langle B_{2},g(\lambda)\right\rangle d\lambda  =q\bigg\}.
\end{eqnarray*}

\noindent
Here  $V( \lambda )$, $U( \lambda )$ are known and fixed spectral densities, $W(\lambda)$ is an unknown spectral density,
 $p, p_k, k=\overline{1,\infty}$, $q,q_{k},k=\overline{1,\infty}$,   are given numbers, $P, Q, B_1, B_2$ are given positive-definite Hermitian matrices.

From the condition $0\in\partial\Delta_{\mathcal{D}}(f^0,g^0)$
we find the following equations which determine the least favourable spectral densities for these given sets of admissible spectral densities.

For the first set of admissible spectral densities $\md D_{f0}^1 \times \md D_{Vg}^{U1}$ we have equations
\begin{multline} \label{eq_4_1f_A_N_i_n_c}
\left(
\me C^{f0}_{\tau,N}(e^{i\lambda})
\right)
\left(
\me C^{f0}_{\tau,N}(e^{i\lambda})
\right)^{*}
=\left(\frac{|1-e^{i\lambda\tau}|^{2d}}{\lambda^{2d}} (f^0(\lambda)+\lambda^{2d}g^0(\lambda))\right)
\\
\times \vec{\alpha}_f\cdot \vec{\alpha}_f^{*}\left(\frac{|1-e^{i\lambda\tau}|^{2d}}{\lambda^{2d}} (f^0(\lambda)+\lambda^{2d}g^0(\lambda))\right),
\end{multline}
\begin{multline} \label{eq_4_1g_A_N_i_n_c}
\left(
\me C^{g0}_{\tau,N}(e^{i\lambda})
\right)
\left(
\me C^{g0}_{\tau,N}(e^{i\lambda})
\right)^{*}
=
\left(\frac{|1-e^{i\lambda\tau}|^{2d}}{\lambda^{2d}}(f^0(\lambda)+{\lambda}^{2n}g^0(\lambda))\right)
\\
\times
(\vec{\beta}\cdot \vec{\beta}^{*}+\Gamma _{1} (\lambda )+\Gamma _{2} (\lambda ))\left(\frac{|1-e^{i\lambda\tau}|^{2d}}{\lambda^{2d}}(f^0(\lambda)+{\lambda}^{2n}g^0(\lambda))\right),
\end{multline}

\noindent where $\vec{\alpha}_f$ and $ \vec{\beta}$  are vectors of Lagrange multipliers,
the matrix $\Gamma _{1} (\lambda )\le 0$ and $\Gamma _{1} (\lambda )=0$ if $g_{0}(\lambda )>V(\lambda ),$ the matrix  $
\Gamma _{2} (\lambda )\ge 0$ and $\Gamma _{2} (\lambda )=0$ if $g_{0}(\lambda )<U(\lambda )$.

For the second set of admissible spectral densities $\md D_{f0}^2 \times \md D_{Vg}^{U2}$ we have equation
\be \label{eq_4_2f_A_N_i_n_c}
\left(
\me C^{f0}_{\tau,N}(e^{i\lambda})
\right)
\left(
\me C^{f0}_{\tau,N}(e^{i\lambda})
\right)^{*}
=\alpha_f^{2} \left(\frac{|1-e^{i\lambda\tau}|^{2d}}{\lambda^{2d}} (f^0(\lambda)+\lambda^{2d}g^0(\lambda))\right)^2,
\ee
\begin{multline}\label{eq_4_2g_A_N_i_n_c}
\left(
\me C^{g0}_{\tau,N}(e^{i\lambda})
\right)
\left(
\me C^{g0}_{\tau,N}(e^{i\lambda})=
\right)^{*}
\\
=(\beta^{2} +\gamma _{1} (\lambda )+\gamma _{2} (\lambda )) \left(\frac{|1-e^{i\lambda\tau}|^{2d}}{\lambda^{2d}}(f^0(\lambda)+{\lambda}^{2n}g^0(\lambda))\right)^2,
\end{multline}

\noindent where $\alpha _{f}^{2}$, $\beta^{2}$ are Lagrange multipliers,  the
 function $\gamma _{1} (\lambda )\le 0$ and $\gamma _{1} (\lambda )=0$ if ${\mathrm{Tr}}\,
[g_{0} (\lambda )]> {\mathrm{Tr}}\,  [V(\lambda )],$ the function $\gamma _{2} (\lambda )\ge 0$ and $\gamma _{2} (\lambda )=0$ if $ {\mathrm{Tr}}\,[g_{0}(\lambda )]< {\mathrm{Tr}}\, [ U(\lambda)]$.

For the third set of admissible spectral densities $\md D_{f0}^{3}\times \md D_{Vg}^{U3}$ we have equation
\begin{multline} \label{eq_4_3f_A_N_i_n_c}
\left(
\me C^{f0}_{\tau,N}(e^{i\lambda})
\right)
\left(
\me C^{f0}_{\tau,N}(e^{i\lambda})
\right)^{*}
=\left(\frac{|1-e^{i\lambda\tau}|^{2d}}{\lambda^{2d}} (f^0(\lambda)+\lambda^{2d}g^0(\lambda))\right)
\\
\times
\left\{\alpha _{fk}^{2} \delta _{kl} \right\}_{k,l=1}^{\infty}
\left(\frac{|1-e^{i\lambda\tau}|^{2d}}{\lambda^{2d}} (f^0(\lambda)+\lambda^{2d}g^0(\lambda))\right),
\end{multline}
\begin{multline} \label{eq_4_3g_A_N_i_n_c}
\left(
\me C^{g0}_{\tau,N}(e^{i\lambda})
\right)
\left(
\me C^{g0}_{\tau,N}(e^{i\lambda})
\right)^{*}
=\left(\frac{|1-e^{i\lambda\tau}|^{2d}}{\lambda^{2d}}(f^0(\lambda)+{\lambda}^{2n}g^0(\lambda))\right)
\\
\times
\left\{(\beta_{k}^{2} +\gamma _{1k} (\lambda )+\gamma _{2k} (\lambda ))\delta _{kl}\right\}_{k,l=1}^{\infty}\left(\frac{|1-e^{i\lambda\tau}|^{2d}}{\lambda^{2d}}(f^0(\lambda)+{\lambda}^{2n}g^0(\lambda))\right),
\end{multline}

\noindent where $\alpha_{fk}^{2}$,  $\beta_{k}^{2}$ are Lagrange multipliers,
$\delta _{kl}$ are Kronecker symbols,
the
functions $\gamma _{1k} (\lambda )\le 0$ and $\gamma _{1k} (\lambda )=0$ if $g_{kk}^{0} (\lambda )>v_{kk} (\lambda ),$ the functions $\gamma _{2k} (\lambda )\ge 0$ and $\gamma _{2k} (\lambda )=0$ if $g_{kk}^{0} (\lambda )<u_{kk} (\lambda).$

For the fourth set of admissible spectral densities $\md D_{f0}^4 \times\md D_{Vg}^{U4}$ we have equations
\begin{multline} \label{eq_4_4f_A_N_i_n_c}
\left(
\me C^{f0}_{\tau,N}(e^{i\lambda})
\right)
\left(
\me C^{f0}_{\tau,N}(e^{i\lambda})
\right)^{*}
=
\alpha_f^{2} \left(\frac{|1-e^{i\lambda\tau}|^{2d}}{\lambda^{2d}} (f^0(\lambda)+\lambda^{2d}g^0(\lambda))\right)
\\
\times B_{1}^{\top}
\left(\frac{|1-e^{i\lambda\tau}|^{2d}}{\lambda^{2d}} (f^0(\lambda)+\lambda^{2d}g^0(\lambda))\right),
\end{multline}
\begin{multline} \label{eq_4_4g_A_N_i_n_c}
\left(
\me C^{g0}_{\tau,N}(e^{i\lambda})
\right)
\left(
\me C^{g0}_{\tau,N}(e^{i\lambda})
\right)^{*}
\\
=
(\beta^{2} +\gamma'_{1}(\lambda )+\gamma'_{2}(\lambda ))
\left(\frac{|1-e^{i\lambda\tau}|^{2d}}{\lambda^{2d}}(f^0(\lambda)+{\lambda}^{2n}g^0(\lambda))\right)
\\
\times
B_{2}^{\top}\left(\frac{|1-e^{i\lambda\tau}|^{2d}}{\lambda^{2d}}(f^0(\lambda)+{\lambda}^{2n}g^0(\lambda))\right),
\end{multline}

\noindent where  $\alpha _{f}^{2}$, $ \beta^{2}$, are Lagrange multipliers,
the
functions $\gamma'_{1}( \lambda )\le 0$ and $\gamma'_{1} ( \lambda )=0$ if $\langle B_{2},g_{0} ( \lambda) \rangle > \langle B_{2},V( \lambda ) \rangle,$ functions $\gamma'_{2}( \lambda )\ge 0$ and $\gamma'_{2} ( \lambda )=0$ if $\langle
B_{2} ,g_{0} ( \lambda) \rangle < \langle B_{2} ,U( \lambda ) \rangle.$

The following theorem  holds true.

\begin{thm}
Let   minimality condition (\ref{umova11_i_n_c}) hold true. The least favorable spectral densities $f_{0}(\lambda)$, $g_{0}(\lambda)$ in the classes
$\md D_{f0}^{k}\times \md D_{Vg}^{Uk},k=1,2,3,4$ for the optimal linear estimation of the functional  $A_N\vec{\xi}$ from observations of the process $ {\xi}(t)+  {\eta}(t)$ at points
$\mr R\setminus[0;(N+1)T]$
are determined by equations
\eqref{eq_4_1f_A_N_i_n_c}--\eqref{eq_4_1g_A_N_i_n_c},  \eqref{eq_4_2f_A_N_i_n_c}--\eqref{eq_4_2g_A_N_i_n_c}, \eqref{eq_4_3f_A_N_i_n_c}--\eqref{eq_4_3g_A_N_i_n_c}, \eqref{eq_4_4f_A_N_i_n_c}--\eqref{eq_4_4g_A_N_i_n_c},
respectively,
the constrained optimization problem (\ref{minimax1_A_N_i_n_c}) and restrictions  on densities from the corresponding classes
$ \md D_{f0}^{k}, \md D_{Vg}^{Uk},k=1,2,3,4$.  The minimax-robust spectral characteristic of the optimal estimate of the functional $A_{NT}{\xi}$ is determined by   formula (\ref{spectr_A_N_i_n_c}).
\end{thm}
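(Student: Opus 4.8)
The plan is to derive the stated equations as the stationarity (Euler--Lagrange) conditions for the least favorable densities, exploiting the saddle-point reformulation already set up above. First I would recall that, by the preceding discussion, a pair $(f^0,g^0)$ is least favorable precisely when it solves the constrained problem \eqref{cond-extr1_A_N_i_n_c}, equivalently the unconstrained problem \eqref{uncond-extr_A_N_i_n_c}, whose solution is characterised by the inclusion $0\in\partial\Delta_{\md D}(f^0,g^0)$. Since the admissible sets $\md D_{f0}^k$ and $\md D_{Vg}^{Uk}$ are convex, it suffices to verify this inclusion; convexity then upgrades the necessary condition to a sufficient one, so that the saddle-point inequalities hold and the characteristic \eqref{spectr_A_N_i_n_c} evaluated at $(f^0,g^0)$ is minimax provided $h^0=h_{\tau,N}(f^0,g^0)\in H_{\md D}$.

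Second, I would compute the two pieces of the subdifferential $\partial\Delta_{\md D}=\partial\widetilde\Delta+\partial\delta(\cdot\mid\md D_f\times\md D_g)$ separately. The functional $\widetilde\Delta(f,g)=-\Delta(h_{\tau,N}(f^0,g^0);f,g)$ is \emph{linear} in the pair $(f,g)$, because the integrands in $\Delta(h_{\tau,N}(f^0,g^0);f,g)$ depend on $f$ and $g$ only through a single factor each, sandwiched between the fixed matrices $(f^0+\lambda^{2d}g^0)^{-1}$ and the fixed vectors $\me C^{f0}_{\tau,N}$, $\me C^{g0}_{\tau,N}$. Hence its subdifferential reduces to the single G\^ateaux derivative, whose $f$-component is the matrix kernel
\[
\frac{\lambda^{2d}}{|1-e^{i\lambda\tau}|^{2d}}
(f^0(\lambda)+\lambda^{2d}g^0(\lambda))^{-1}
\me C^{f0}_{\tau,N}(e^{i\lambda})\,(\me C^{f0}_{\tau,N}(e^{i\lambda}))^{*}
(f^0(\lambda)+\lambda^{2d}g^0(\lambda))^{-1},
\]
and whose $g$-component is the analogous kernel with $\lambda^{4d}|1-e^{i\lambda\tau}|^{-4d}$ and $\me C^{g0}_{\tau,N}$ in place of the above, as in the vector-valued stationary increment treatment of \cite{Luz_Mokl_book,Moklyachuk2015}.

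Third, I would read off $\partial\delta(\cdot\mid\md D_f\times\md D_g)$ class by class. For the moment-type constraints defining $\md D_{f0}^k$ (and the normalisation parts of $\md D_{Vg}^{Uk}$) the subdifferential of the indicator function contributes Lagrange-multiplier terms: a rank-one matrix $\vec\alpha_f\vec\alpha_f^{*}$ for $k=1$, a scalar $\alpha_f^2$ for $k=2$, a diagonal $\{\alpha_{fk}^2\delta_{kl}\}$ for $k=3$, and $\alpha_f^2 B_1^{\top}$ for $k=4$, with the parallel terms $\vec\beta\vec\beta^{*}$, $\beta^2$, $\{\beta_k^2\delta_{kl}\}$, $\beta^2 B_2^{\top}$ on the $g$-side. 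The pointwise two-sided bounds $V\le g\le U$ in $\md D_{Vg}^{Uk}$ contribute the extra matrices (functions) $\Gamma_1,\Gamma_2$ (respectively $\gamma_1,\gamma_2$) carrying the sign and complementary-slackness conditions stated after \eqref{eq_4_1g_A_N_i_n_c}--\eqref{eq_4_4g_A_N_i_n_c}. Setting the sum of the two subdifferentials to zero and multiplying the resulting identities on both sides by $(f^0+\lambda^{2d}g^0)$ clears the inverse factors and yields exactly the paired equations \eqref{eq_4_1f_A_N_i_n_c}--\eqref{eq_4_1g_A_N_i_n_c}, \eqref{eq_4_2f_A_N_i_n_c}--\eqref{eq_4_2g_A_N_i_n_c}, \eqref{eq_4_3f_A_N_i_n_c}--\eqref{eq_4_3g_A_N_i_n_c}, and \eqref{eq_4_4f_A_N_i_n_c}--\eqref{eq_4_4g_A_N_i_n_c}.

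Finally, the determination of $(f^0,g^0)$ is closed by coupling these pointwise equations with the moment and bound restrictions of the classes and with the extremal problem \eqref{minimax1_A_N_i_n_c}, which fixes the otherwise free multipliers; under the minimality condition \eqref{umova11_i_n_c} the operator $(\me P^{\tau}_N)^0$ is invertible, so $\vec{C}^0_{\tau,N}$ and hence $h_{\tau,N}(f^0,g^0)$ are well defined and \eqref{spectr_A_N_i_n_c} applies. The hard part will be the subdifferential of the indicator of the band-constrained class $\md D_{Vg}^{Uk}$: unlike the moment constraints it is not an affine set, so one must justify that its subdifferential splits into the active-constraint multipliers $\Gamma_1\le 0$, $\Gamma_2\ge 0$ (respectively $\gamma_1,\gamma_2$) with the stated complementary slackness, and then check that the $(f^0,g^0)$ produced still satisfies \eqref{umova11_i_n_c}, so that the whole construction and the membership $h_{\tau,N}(f^0,g^0)\in H_{\md D}$ are legitimate.
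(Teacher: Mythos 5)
Your proposal follows essentially the same route as the paper: reduce least favorability to the constrained problem \eqref{cond-extr1_A_N_i_n_c}, pass to the unconstrained form \eqref{uncond-extr_A_N_i_n_c}, characterize solutions by $0\in\partial\Delta_{\mathcal{D}}(f^0,g^0)$, exploit the linearity of $\Delta(h_{\tau,N}(f^0,g^0);f,g)$ in $(f,g)$ to compute the G\^ateaux derivative, and combine it with the Lagrange multipliers and indicator-function subdifferentials of each class $\md D_{f0}^{k}\times\md D_{Vg}^{Uk}$ to obtain equations \eqref{eq_4_1f_A_N_i_n_c}--\eqref{eq_4_4g_A_N_i_n_c} after clearing the inverse factors. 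In fact your write-up makes explicit several steps (the derivative kernels, the multiplication by $f^0+\lambda^{2d}g^0$, and the complementary-slackness structure of the band constraints) that the paper only sketches by citing \cite{Moklyachuk2015,Luz_Mokl_book}, so it is a correct and slightly more detailed version of the same argument.
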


\section{Conclusions}

In this article, we present results of investigation of the stochastic processes with periodically stationary increments.
These non-stationary stochastic processes combine  periodic structure of covariation functions of sequences as well as integrating one.

We describe methods of solution of the classical interpolation problem for a linear functional   constructed from unobserved values of a process  with periodically stationary increments.
Estimates are based on observations of this process with a periodically stationary noise process.
Estimates are obtained by representing the process under investigation as a generated infinite dimensional vector  sequence with stationary increments.
The problem is investigated in the case of spectral certainty, where spectral densities of the generates sequences are exactly known.
In this case,  we propose an approach based on the Hilbert space projection method.
We derive formulas for calculating the spectral characteristics and the mean-square errors of the optimal estimates of the functionals.
In the case of spectral uncertainty where the spectral densities are not exactly known while, instead, some sets of admissible spectral densities are specified,
the minimax-robust method is applied.
We propose a representation of the mean square error in the form of a linear
functional in $L_1$ space with respect to spectral densities, which allows
us to solve the corresponding constrained optimization problem and
describe the minimax-robust estimates of the functionals. Formulas
that determine the least favorable spectral densities and minimax-robust spectral characteristic of the optimal linear estimates of
the functionals are derived  for a collection of specific classes
of admissible spectral densities.


\begin{thebibliography}{99}




\bibitem{Basawa}
\newblock I.V. Basawa, R. Lund and Q. Shao,
\newblock \emph{First-order seasonal autoregressive processes with periodically varying parameters},
\newblock Statistics \& Probability Letters, vol. 67, no. 4, pp. 299--306, 2004.



\bibitem{Dubov1}
\newblock I. I. Dubovets'ka, O.Yu. Masyutka and M.P. Moklyachuk,
\newblock \emph{Interpolation of periodically correlated stochastic sequences},
\newblock Theory of Probability and Mathematical Statistics, vol. 84, pp. 43--56, 2012.




\bibitem{Dudek-Hurd}
\newblock A. Dudek, H. Hurd and W. Wojtowicz,
\newblock \emph{PARMA methods based on Fourier representation of periodic coefficients},
\newblock Wiley Interdisciplinary Reviews: Computational Statistics, vol. 8, no. 3, pp. 130--149, 2016.

\bibitem{Franke1984}
\newblock J. Franke and  H. V. Poor,
\newblock \emph{Minimax-robust filtering and
finite-length robust predictors},
\newblock  In: Robust and Nonlinear Time
Series Analysis, Lecture Notes in Statistics, Springer-Verlag, No.26,
87-126, 1984.


\bibitem{Gihman_Skorohod}
\newblock I. I. Gikhman and A. V. Skorokhod,
\newblock \emph{The theory of stochastic processes. I.},
\newblock Berlin: Springer, 574 p., 2004.





\bibitem{Glad1963}
 \newblock E. G. Gladyshev,
 \newblock \emph{Periodically and almost-periodically correlated random processes with continuous time parameter},
 \newblock Theory Probab. Appl. vol. 8, pp. 173--177, 1963.




\bibitem{Grenander}
\newblock U. Grenander,
\newblock \emph{A prediction problem in game theory},
\newblock Arkiv f\"or Matematik, vol. 3, pp. 371-379, 1957.







\bibitem{Hosoya}
 \newblock Y. Hosoya,
\newblock \emph{Robust linear extrapolations of second-order stationary processes},
 \newblock Annals of Probability, vol. 6, no. 4, pp. 574--584, 1978.




\bibitem{Kallianpur}
\newblock Kallianpur, G., Mandrekar, V.:
\newblock  Spectral theory of stationary H-valued processes.
\newblock J. Multivariate Analysis, vol. 1, pp. 1-16, 1971.




\bibitem{Kassam_Poor1985}
 \newblock S. A. Kassam and H. V. Poor,
\newblock \emph{Robust techniques for signal
processing: A survey},
 \newblock Proceedings of the IEEE No.73, pp. 433-481, 1985.


\bibitem{Karhunen}
\newblock K. Karhunen,
\newblock \emph{Uber lineare Methoden in der Wahrscheinlichkeitsrechnung},
\newblock Annales Academiae Scientiarum Fennicae. Ser. A I, no. 37, 1947.


\bibitem{Kolmogorov}
\newblock A. N. Kolmogorov,
\newblock \emph{Selected works by A. N. Kolmogorov. Vol. II: Probability theory and mathematical statistics. Ed. by A.
N. Shiryayev. Mathematics and Its Applications. Soviet Series. 26. Dordrecht etc.}
\newblock Kluwer Academic Publishers, 1992.


\bibitem{Kozak_Mokl}
\newblock P. S. Kozak  and M. P. Moklyachuk,
\newblock \emph{Estimates of functionals constructed from random sequences with periodically stationary increments},
\newblock Theory Probability and Mathematical Statistics, vol. 97, pp. 85--98, 2018.




\bibitem{Luz_Mokl_book}
\newblock M. Luz and M. Moklyachuk,
\newblock \emph{Estimation of Stochastic Processes with Stationary Increments and Cointegrated Sequences},
\newblock London: ISTE; Hoboken, NJ: John Wiley \& Sons, 282 p., 2019.

\bibitem{Luz_Mokl_int_noise_GMI}
\newblock M. Luz and M. Moklyachuk,
\newblock \emph{Robust interpolation of sequences with periodically stationary multiplicative seasonal increments.} 
\newblock Carpathian Math. Publ., Vol.14, no.1, pp. 105--126, 2022

\bibitem{Luz_Mokl_extra_noise_PCI}
\newblock M. Luz and M. Moklyachuk,
\newblock \emph{Minimax  prediction of sequences with periodically stationary increments observes with noise and cointegrated sequences},
\newblock In: M. Moklyachuk (eds) Stochastic Processes: Fundamentals and Emerging Applications. Nova Science Publishers,  New York, pp. 189--247, 2023.

\bibitem{Luz_Mokl_extra_cont_PCI}
\newblock M. Luz and M. Moklyachuk,
\newblock \emph{Estimation problem for continuous time stochastic processes with periodically correlated increments},
\newblock  arXiv:2304.12220, 2023.



\bibitem {Moklyachuk:1981}
\newblock M. P. Moklyachuk,
\newblock  \emph{Estimation of linear functionals of stationary stochastic processes and a two-person zero-sum game},   
\bibitem Stanford University Technical Report No. 169, 1981.

\bibitem{Moklyachuk}
\newblock M. P. Moklyachuk,
\newblock \emph{Stochastic autoregressive sequences and minimax interpolation},
\newblock Theory Probability and Mathematical Statistics, vol. 48, pp. 95--103, 1994.

\bibitem{Moklyachuk:2008}
\newblock M. P. Moklyachuk,
\newblock \emph{Robust estimations of functionals of stochastic processes},
\newblock {Ky\"{\i}v: Vydavnychyj Tsentr ``Ky\"{\i}vs'ky\u{\i} Universytet''}, 320 p., 2008.  (in Ukrainian)


\bibitem{Moklyachuk2015}
\newblock M. P. Moklyachuk,
\newblock \emph{Minimax-robust estimation problems for stationary stochastic sequences},
\newblock Statistics, Optimization and Information Computing, vol. 3, no. 4, pp. 348--419, 2015.

\bibitem{MoklyachukGolichenko2016}
\newblock Moklyachuk, M.P., Golichenko, I. I.
\newblock Periodically correlated processes estimates.
\newblock Saarbr\"ucken: LAP Lambert Academic Publishing. 308 p. (2016).



\bibitem{Mokl_Mas_book}
\newblock M.P. Moklyachuk and A.Yu. Masyutka,
\newblock \emph{Minimax-robust estimation technique: For  stationary stochastic processes},
\newblock LAP Lambert Academic Publishing,  h 296 p. 2012.


\bibitem{Sidei_book}
\newblock M. Moklyachuk, M. Sidei and O. Masyutka,
\newblock \emph{Estimation of stochastic processes with missing observations},
\newblock Mathematics Research Developments. New York, NY: Nova Science Publishers, 336 p., 2019.

\bibitem{Napolitano}
\newblock A. Napolitano,
\newblock \emph{Cyclostationarity: New trends and applications},
\newblock Signal Processing, vol. 120, pp. 385--408, 2016.




\bibitem{Pinsker}
\newblock M. S. Pinsker and A. M. Yaglom,
\newblock \emph{On linear extrapolaion of random processes with $n$th stationary incremens},
\newblock Doklady Akademii Nauk SSSR, n. Ser. vol. 94, pp. 385--388, 1954.



\bibitem{Reisen2018}
\newblock V. A. Reisen,  E. Z. Monte,  G. C. Franco,  A. M. Sgrancio,  F. A. F. Molinares,  P. Bondond, F. A. Ziegelmann and B. Abraham,
\newblock \emph{Robust estimation of fractional seasonal processes: Modeling and forecasting daily average SO2 concentrations},
\newblock Mathematics and Computers in Simulation, vol. 146, pp. 27--43, 2018.



\bibitem{Rockafellar}
\newblock R. T. Rockafellar,
\newblock \emph{Convex Analysis},
\newblock Princeton Landmarks in Mathematics. Princeton, NJ: Princeton University Press, 451 p., 1997.

\bibitem{Solci}
\newblock C. C. Solci, V. A. Reisen, A. J. Q. Sarnaglia and  P. Bondon,
\newblock \emph{Empirical study of robust estimation methods for PAR models with application to the air quality area},
\newblock Communication in Statistics - Theory and Methods,  vol. 48, no. 1, pp. 152--168, 2020.


\bibitem{VastPoor1983}
\newblock S. K. Vastola and H. V. Poor,
\newblock \emph{RAn analysis of the effects of
spectral uncertainty on Wiener filtering},
\newblock Automatica vol.28, 289-293, 1983.


\bibitem{bookWien}
\newblock N. Wiener,
\newblock \emph{Extrapolation, interpolation, and smoothing of stationary time series: with engineering applications}.
\newblock  The Technology Press of The Massachusetts Institute of Technology. New York: John Wiley \& Sons, Inc., London: Chapman \& Hall, 1949.

\bibitem{Yaglom:1955}
\newblock  A. M. Yaglom,
 \newblock \emph{Correlation theory of stationary and related random processes with stationary $n$th increments},
\newblock Mat. Sbornik, vol. 37, no. 1, pp. 141--196, 1955.

\bibitem{Yaglom}
\newblock A. M.   Yaglom,
\newblock \emph{Correlation theory of stationary and related random functions. Vol. 1: Basic results;  Vol. 2: Suplementary notes and references},
\newblock Springer Series in Statistics, Springer-Verlag, New York etc., 1987.





\end{thebibliography}
\end{document}